\documentclass[reqno,11pt]{amsart}
\usepackage{graphicx,amsfonts,amssymb,amsmath,amsthm,amscd}
\usepackage[all]{xy}
\usepackage{color}
\usepackage{hyperref}
\hypersetup{colorlinks,linkcolor=magenta,citecolor=blue,filecolor=blue,urlcolor=blue}

\theoremstyle{plain} 
\newcounter{statement}[section]

\newtheorem{theorem}    [statement]{Theorem}

\newtheorem{lemma}      [statement]{Lemma}
\newtheorem{cor}  [statement]{Corollary}
\newtheorem{prop}[statement]{Proposition}

\newcounter{intro}

\newtheorem{intro_theorem}[intro]{Theorem}
\newtheorem{intro_cor}[intro]{Corollary}

\theoremstyle{definition}
\newtheorem{definition} [statement]{Definition}
\newtheorem*{definition*}           {Definition}

\newtheorem{example}    [statement]{Example}

\theoremstyle{remark}
\newtheorem{remark}[statement]{Remark}

\newtheorem*{claim*}               {Claim}

\definecolor{prpl}{rgb}{0.7, 0.0, 0.7}
\newenvironment{JT}{\noindent \color{prpl}{\bf JT:} \footnotesize}{} \newenvironment{BB}{\noindent \color{blue}{\bf BB:} \footnotesize}{}


\def\PU{\operatorname{PU}}
\def\U{\operatorname{U}}

\def\Hom{\operatorname{Hom}}

\def\Imag{\operatorname{Im}}

\def\mult{\operatorname{mult}}
\def\vol{\operatorname{vol}}

\def\chone{\operatorname{c_1}}
\def\chtwo{\operatorname{c_2}}
\def\Ric{\operatorname{Ric}}
\def\Supp{\operatorname{Supp}}

\def\ord{\operatorname{ord}}

\def\P{\mathbb{P}}
\def\Q{\mathbb{Q}}
\def\Z{\mathbb{Z}}
\def\C{\mathbb{C}}
\def\R{\mathbb{R}}

\def\H{\mathbb{H}}

\def\B{\mathbb{B}}
\def\SS{\mathbb{S}}

\makeatletter
\newcommand{\xleftrightarrow}[2][]{\ext@arrow 3359\leftrightarrowfill@{#1}{#2}}
\newcommand{\xdashrightarrow}[2][]{\ext@arrow 0359\rightarrowfill@@{#1}{#2}}
\newcommand{\xdashleftarrow}[2][]{\ext@arrow 3095\leftarrowfill@@{#1}{#2}}
\newcommand{\xdashleftrightarrow}[2][]{\ext@arrow 3359\leftrightarrowfill@@{#1}{#2}}
\def\rightarrowfill@@{\arrowfill@@\relax\relbar\rightarrow}
\def\leftarrowfill@@{\arrowfill@@\leftarrow\relbar\relax}
\def\leftrightarrowfill@@{\arrowfill@@\leftarrow\relbar\rightarrow}
\def\arrowfill@@#1#2#3#4{%
  $\m@th\thickmuskip0mu\medmuskip\thickmuskip\thinmuskip\thickmuskip
   \relax#4#1
   \xleaders\hbox{$#4#2$}\hfill
   #3$%
}
\makeatother
\newcommand\dashto{\mathrel{
  -\mkern-6mu{\to}\mkern-20mu{\color{white}\bullet}\mkern12mu
}}


\renewcommand{\O}{\mathcal{O}}


\renewcommand{\phi}{\varphi}
\renewcommand{\tilde}[1]{\widetilde{#1}}
\renewcommand{\bar}[1]{\overline{#1}}


\def\into{\rightarrow}

 \title[The Kodaira dimension of hyperbolic manifolds]{The Kodaira dimension of complex hyperbolic manifolds with cusps}
 \date{\today}
 \author{Benjamin Bakker}
 \address{B. Bakker:
 Institut f\"ur Mathematik, Humboldt-Universit\"at zu Berlin.
 }
 \email{benjamin.bakker@math.hu-berlin.de}
\author{Jacob Tsimerman}
\address{J. Tsimerman:
Department of Mathematics, University of Toronto.
}
\email{jacobt@math.toronto.edu}

\begin{document}

\begin{abstract}  We prove a bound relating the volume of a curve near a cusp in a hyperbolic manifold to its multiplicity at the cusp.  The proof uses a hybrid technique employing both the geometry of the uniformizing group and the algebraic geometry of the toroidal compactification.  There are a number of consequences:  we show that for an $n$-dimensional toroidal compactification $\bar X$ with boundary $D$, $K_{\bar X}+(1-\frac{n+1}{2\pi}) D$ is nef, and in particular that $K_{\bar X}$ is ample for $n\geq 6$.  By an independent algebraic argument, we prove that every hyperbolic manifold of dimension $n\geq 3$ is of general type, and conclude that the phenomena famously exhibited by Hirzebruch in dimension 2 do not occur in higher dimensions.  Finally, we investigate the applications to the problem of bounding the number of cusps and to the Green--Griffiths conjecture.  
\end{abstract}

\maketitle
\section{Introduction}

Complex hyperbolic manifolds are complex manifolds admitting a complete finite-volume metric of constant negative sectional curvature.  Such manifolds are quotients of the complex hyperbolic ball by a discrete group of holomorphic isometries.  Just as for real hyperbolic manifolds, the topology of the uniformizing group is a powerful tool in studying their geometry.  On the other hand, work of \cite{amrt} and \cite{mok} shows that such manifolds always admit orbifold toroidal compactifications whose algebraic geometry provides an equally powerful complementary set of techniques.  Quotients by arithmetic lattices naturally arise as Shimura varieties parametrizing abelian varieties with certain endomorphism structure.  Interestingly, the hyperbolic ball is the only bounded symmetric domain that admits nonarithmetic lattices \cite{marguilis}, and examples have only been constructed in dimensions 2 and 3 by Mostow \cite{mostow} and Deligne--Mostow \cite{delignemostow}.

In this paper we study curves in non-compact complex hyperbolic manifolds.  Our first main result is:
\begin{intro_theorem}\label{slope}  Let $X$ be a complex hyperbolic manifold of dimension $n$ whose toroidal compactification $\bar X$ has no orbifold points.  Then $K_{\bar X}+(1-\lambda)D$ is ample for $0<\lambda<\frac{n+1}{2\pi}$.  
\end{intro_theorem} 

\begin{intro_cor}In dimension $n\geq 6$, $K_{\bar X}$ is ample---\emph{i.e.} $\bar X$ is the canonical model of $X$.
\end{intro_cor}

Of course, if $X$ is already compact, then $K_X$ is clearly ample.  Theorem \ref{slope} is proven by showing that the volume of a curve near a cusp is bounded by its multiplicity along the corresponding boundary divisor (see Propositions \ref{volumeineq} and \ref{positivity}) with a coefficient depending only on the associated parabolic stabilizer.   The bound allows us to translate group-theoretic properties of the lattice into the positivity of a divisor in the span of $K_{\bar X}$ and the boundary components.    Shimizu's lemma is enough to conclude Theorem \ref{slope} using only the discreteness of the group; specific information about the parabolic stabilizers gives stronger positivity results.  

The toroidal compactification of a complex hyperbolic manifold satisfies the hypotheses of Theorem \ref{slope} under mild assumptions on the uniformizing group (see Definition \ref{smooth}), and every complex hyperbolic orbifold admits a finite \'etale cover which satisfies this property.  Note that $K_{\bar X}+D$ induces the contraction $\bar X\into X^*$ to the Baily--Borel compactification, and therefore always generates one of the boundary rays of the slice of the nef cone cut out by the plane generated by $K_{\bar X}$ and $D$.  It is an interesting question in general for toroidal compactifications (not necessarily of hyperbolic manifolds) to determine the slope of the opposite boundary ray, and Theorem \ref{slope} shows that in this case it grows uniformly with dimension.  

Theorem \ref{slope} implies that hyperbolic manifolds in dimensions $n\geq 6$ are of general type (in fact $K_{\bar X}$ being ample is much stronger), but this need not be true in low dimensions.  Indeed, any rational curve with at least three punctures or any elliptic curve with at least one puncture is hyperbolic, so every Kodaira dimension can arise in dimension 1.  A famous series of examples due to Hirzebruch \cite{hirz} shows that there are also infinitely many smooth hyperbolic surfaces with Kodaira dimension 0 (see Example \ref{hirzy}).  We give an independent algebraic argument that in fact hyperbolic manifolds of dimension $n\geq 3$ are of general type, thereby showing that there is no higher-dimensional analog of Hirzebruch's construction:

\begin{intro_theorem}\label{kodaira}  Let $X$ be a complex hyperbolic manifold of dimension $n\geq 3$ whose toroidal compactification $\bar X$ has no orbifold points.  Then $X$ is of general type.
\end{intro_theorem}
Thus, $K_{\bar X}$ is big; $K_{\bar X}$ is also nef by a recent theorem of Di Cerbo--Di Cerbo \cite{cerbocanon} (see Theorem \ref{minimal} below).  These two facts together imply an interesting consequence to the birational geometry of such varieties:  by the basepoint-free theorem \cite[Theorem 3.3]{kollarmori}, $K_{\bar X}$ is in fact semi-ample.  In general, the abundance conjecture asserts that some multiple of $K_Y$ is basepoint-free for any smooth minimal projective variety $Y$.

\begin{intro_cor}  In dimension $n\geq 3$, $\bar X$ satisfies the abundance conjecture---\emph{i.e.} $K_{\bar X}$ is semi-ample.
\end{intro_cor}
Theorems \ref{slope} and \ref{kodaira} also improve Parker's bound \cite{parker} on the number of cusps of a complex hyperbolic manifold of fixed volume:
\begin{intro_cor}\label{cuspbounds}  If $k$ is the number of cusps of $X$, then
\[\frac{\vol(X)}{k}\geq \frac{2^n}{n}\]
\end{intro_cor}
Theorem \ref{kodaira} gives a slightly better bound in low dimensions, see Corollary \ref{manycusps} (this is also observed in \cite{cerbocanon}).  The bound of Corollary \ref{cuspbounds} is in fact equal to Parker's bound for uniformizing groups whose parabolic subgroups are unipotent \cite[Theorem 3.1]{parker}, though Corollary \ref{cuspbounds} applies to a larger class of lattices (see also the discussion after Corollary \ref{manycusps}).  This is interesting because Parker's method cannot give the same bound in this case.  The main error in Parker's general result comes from bounding the minimal index of a Heisenberg lattice in the stabilizer of a cusp, which does not appear here.

With Theorem \ref{kodaira} in place, we can ask if $\bar X$ satisfies the Green--Griffiths conjecture, which says that if $Y$ is a projective variety of general type, then there is a strict subvariety $Z\subset Y$ such that every nontrivial entire map $\C\into Y$ has image in $Z$.  In this case we say that $Z$ is the exceptional locus  By a theorem of Nadel \cite{nadel}, it is not difficult to show that some finite cover of $\bar X$ satisfies the conjecture; Theorems \ref{slope} and \ref{kodaira} allow us to conclude that ``most" covers do:
\begin{intro_cor}\label{brody}With $X$ as in Theorem \ref{slope}, let $X'\into X$ be a finite \'etale cover that ramifies at each boundary component to order $\ell$.  Then $\bar X'$ satisfies the Green--Griffiths conjecture with the boundary as exceptional locus if: 
\begin{enumerate}
\item $\ell\geq 2$ and $n=3$;
\item $\ell\geq 3$ and $n=4,5$;
\item $\ell\geq 4$ and $n\geq 6$.
\end{enumerate}
\end{intro_cor}

Finally, Theorem \ref{slope} substantially improves a variety of results about complex hyperbolic manifolds that have been proven recently using the algebraic geometry of toroidal compactifications.  These methods use as input the positivity of divisors of the form $K_{\bar X}+(1-\lambda)D$; for $\lambda=0$ it comes for free on any toroidal compactification.  Di Cerbo--Di Cerbo \cite{cerboeff} have systematically studied effectivity results that follow from this positivity in the range $\lambda\in [0,2/3]$ (or more recently for $\lambda\in[0,1]$ in \cite{cerbocanon}), including bounds on the number, degree, and Picard rank of hyperbolic manifolds of a given volume.  For most of these results, simply plugging Theorem \ref{slope} into their argument provides a better result, and we choose to leave these modifications to the reader.

The multiplicity bounds of Propositions \ref{relative} and \ref{volumeineq} can more generally be proven for quotients of a bounded symmetric domain by a rank one lattice, and we investigate the implications to Hilbert modular varieties in \cite{HMV}.

\subsection*{Outline}  In Section \ref{background} we collect some background on hyperbolic manifolds and their toroidal compactifications.  In Section \ref{bound} we prove the volume bound on the multiplicity of curves along the boundary.  These bounds are similar to those proven by Hwang--To \cite{hwangto1} for \emph{interior} points of locally symmetric varieties.   We provide a self-contained algebraic proof of Theorem \ref{kodaira} in Section \ref{algebraic}; in Section \ref{koddim} the result is subsequently strengthened to Theorem \ref{slope} using the multiplicity bound of Section \ref{bound}, and we prove Corollaries \ref{cuspbounds} and \ref{brody}.\subsection*{Acknowledgements}
The first named author would like to thank M. Stover and G. Di Cerbo for many useful conversations, and G. Di Cerbo in particular for introducing the authors to some of the open problems in the field.  This paper was written during the first named author's visit to Columbia University, and he is grateful for their hospitality.
\section{Background}\label{background}
The hyperbolic $n$-ball is the domain
\[\B=\B^n=\{z\in \C^n||z|^2<1\}\]
It has holomorphic automorphism group $\PU(n,1)$ and Bergman metric
\[h=ds_\B^2=4\cdot\frac{(1-|z|^2)\sum_i dz_i\otimes d\bar z_i+(\sum_i \bar z_idz_i)\otimes(\sum_iz_id\bar z_i)}{(1-|z|^2)^2}\]
of constant sectional curvature $-1$.  With this normalization, $\Ric(h)=-\frac{n+1}{2}h$, and the associated K\"ahler form is $\omega_\B=\frac{1}{2}\Imag ds_\B^2$.  
%

Let $\Gamma\subset \PU(n,1)$ be a cofinite-volume discrete subgroup and $X=\B/\Gamma$.  $X$ naturally has the structure of an orbifold; every elliptic element of $\Gamma$ is torsion, so if $\Gamma$ is torsion-free $X$ is a smooth complex manifold.  $\Gamma$ always admits a finite index torsion-free (in fact neat) subgroup, by \cite{amrt} in the arithmetic case and  \cite{hummel} in general.  Henceforth we will typically only consider $\Gamma$ torsion-free, and we will refer to such $X$ as torsion-free ball quotients.  

The cusps of $X$ are in one-to-one correspondence with the equivalence classes of parabolic fixed points of $\Gamma$, and the Baily--Borel compactification $X^*$ is a normal projective variety obtained by adding one point for each cusp (\cite{bb} in the arithmetic case; \cite{sy} in general).  $X$ also admits a unique orbifold toroidal compactification $\bar X$ by \cite{amrt} in the case of an arithmetic lattices $\Gamma$ and by \cite{mok} in general.  If $\bar X$ has no orbifold points (see Definition \ref{smooth}), then it is a smooth projective variety and each connected component $E$ of the boundary divisor $D$ is an \'etale quotient of an abelian variety whose normal bundle $\O_E(E)$ is anti-ample.  If the parabolic subgroups of $\Gamma$ are unipotent (in particular if $\Gamma$ is neat), the boundary $D$ is a disjoint union of abelian varieties.  In any case, the log-canonical divisor $K_{\bar X}+D$ is semi-ample and induces a birational map $\bar X\into X^*$ which is an isomorphism on the open part $X$ and contracts each boundary component $E$ to the point of $X^*$ compactifying the corresponding cusp. 

The hermitian metric $ds_\B^2$ descends to $X$ and extends to a ``good" singular hermitian metric on the log-tangent bundle $T_{\bar X}(-\log D)$ by a theorem of Mumford \cite{mumfordhp}.  Likewise, there is a natural singular hermitian metric on the log-canonical bundle $\omega_{\bar X}(D)$, and integration against the K\"ahler form $\omega_X$ on the open part represents (as a current) a multiple of the first Chern class dictated by our choice of normalization:
\begin{equation}\chone(K_{\bar X}+D)=\frac{1}{2\pi}\frac{n+1}{2}[\omega_X]\in H^{1,1}(\bar X,\R)\label{canonical}\end{equation}
For analyzing the boundary behavior in more detail, the Siegel model is more convenient.  Our presentation is taken from Parker \cite{parker}.  Let 
\[\SS=\SS^n=\C^{n-1}\times \R\times\R_{>0}\]
where $\C^{n-1}$ is endowed with the standard positive definite hermitian form\footnote{Our hermitian forms are linear in the first variable.} $(\cdot,\cdot)$.  We use coordinates $(\zeta,v,u)$, and note that holomorphic coordinates in this model are given by $\zeta$ and 
\[z=v+i(|\zeta|^2+u)\]
whence
\[\SS=\{(\zeta,z)\in\C^{n-1}\times \C\mid \Imag z>|\zeta|^2\}\]
The Siegel model comes with a preferred cusp at infinity whose parabolic stabilizer $G_\infty$ contains the group of Heisenberg isometries $U_\infty:=\U(n-1)\ltimes \frak{N}$ acting only on the first two coordinates $\C^{n-1}\times\R$:  Heisenberg rotations $\U(n-1)$ act on $\C^{n-1}$ in the usual way and Heisenberg translations $\frak{N}\cong \C^{n-1}\times\R$ act via
\[(\tau,t):(\zeta,v)\mapsto (\zeta+\tau,v+t+2\Imag(\tau,\zeta))\]
For completeness, we note that in the holomorphic coordinates this is:
\[(\tau,t):(\zeta,z)\mapsto \left(\zeta+\tau,z+t+i|\tau|^2+2i(\zeta,\tau)\right)\]
We denote by $(A,\tau,t)$ the transformation which first rotates by $A\in U(n-1)$ and then translates by $(\tau,t)$.  $\frak{N}$ is a central extension of the group $\C^{n-1}$ of translations on the first coordinate by the group $\R$ of translations in the second coordinate.  We call translations of the form $(0,t)$ \emph{vertical} translations, and note that the subgroup $T_\infty\subset G_\infty$ of vertical translations is the center.  The group $U_\infty/T_\infty$ is identified with the group of affine unitary transformations of $\C^{n-1}$ via projection to the $\zeta$ coordinate.

The subgroup $U_\infty\subset G_\infty$ can be thought of as the stabilizer of the \emph{height} coordinate $u$, and $-2\log u$ is a potential for the K\"ahler form:

\begin{lemma} $\omega_\SS=-2i\partial\bar\partial \log u$.
\end{lemma}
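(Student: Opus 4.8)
\emph{Proof proposal.}
The plan is to realize $\SS$ as the image of the ball $\B$ under the Cayley transform and reduce the statement to the familiar fact that $-2\log(1-|w|^2)$ is a K\"ahler potential for $\omega_\B$. Recall the Cayley transform $C\colon \B\to\SS$, which in coordinates $w=(w',w_n)\in\C^{n-1}\times\C$ on $\B$ is given by $\zeta=w'/(1+w_n)$ and $z=i(1-w_n)/(1+w_n)$; one checks directly that $C$ is a biholomorphism onto $\SS$, and since the Siegel and ball models carry the same complex hyperbolic metric we have $C^*\omega_\SS=\omega_\B$. A short computation gives
\[
u\circ C \;=\; \Imag(z\circ C)-|\zeta\circ C|^2 \;=\; \frac{1-|w|^2}{|1+w_n|^2}.
\]

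Since $|w_n|<1$ on $\B$, the function $1+w_n$ is nowhere vanishing, so $\log|1+w_n|^2=\log(1+w_n)+\log(1+\bar w_n)$ is pluriharmonic on $\B$; hence $\partial\bar\partial(\log u\circ C)=\partial\bar\partial\log(1-|w|^2)$. It therefore remains to verify that $\omega_\B=-2i\,\partial\bar\partial\log(1-|w|^2)$. This is a direct computation: writing $\rho=1-|w|^2$ one finds
\[
\partial\bar\partial\log\rho \;=\; -\frac{1}{\rho^{2}}\sum_{i,j}\bigl(\rho\,\delta_{ij}+\bar w_i w_j\bigr)\,dw_i\wedge d\bar w_j,
\]
and comparing with the components $h_{i\bar j}=\tfrac{4}{\rho^{2}}\bigl(\rho\,\delta_{ij}+\bar w_i w_j\bigr)$ read off from the stated formula for $ds_\B^2$ — keeping track of the normalization $\omega_\B=\tfrac12\Imag ds_\B^2$ — yields precisely $-2i\,\partial\bar\partial\log\rho=\omega_\B$.

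Combining the two steps, $C^*\bigl(-2i\,\partial\bar\partial\log u\bigr)=-2i\,\partial\bar\partial\log(1-|w|^2)=\omega_\B=C^*\omega_\SS$, and since $C$ is a biholomorphism we conclude $\omega_\SS=-2i\,\partial\bar\partial\log u$. There is no serious obstacle here: the only genuine bookkeeping is pinning down the constant $-2i$ against the stated normalization of $\omega_\B$, and everything else is formal. (Alternatively one may compute $-2i\,\partial\bar\partial\log u$ directly on $\SS$, obtaining $2iu^{-1}\sum_k d\zeta_k\wedge d\bar\zeta_k+2iu^{-2}\,\partial u\wedge\bar\partial u$ with $\partial u=\tfrac1{2i}dz-\sum_k\bar\zeta_k\,d\zeta_k$, and then identify this with the pullback of $\omega_\B$; this avoids writing down $C$ but still requires the same normalization check.)
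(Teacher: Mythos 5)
Your proof is correct but takes a genuinely different route from the paper. The paper's proof simply quotes the explicit form of the Siegel-model metric $ds_\SS^2 = u^{-2}\bigl(du^2 + (dv - 2\Imag(\zeta,d\zeta))^2 + 4u(d\zeta,d\zeta)\bigr)$ (citing Parker) and states that the identity follows by a direct computation with that formula; no Cayley transform appears. You instead push everything back to the ball model: using the explicit Cayley transform you compute $u\circ C = (1-|w|^2)/|1+w_n|^2$, observe that the denominator contributes only a pluriharmonic $\log|1+w_n|^2$, and thereby reduce the lemma to the standard fact that $-2\log(1-|w|^2)$ is a K\"ahler potential for $\omega_\B$ in the paper's normalization. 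Both approaches hinge on the same bookkeeping of the constant $-2i$ against $\omega_\B = \tfrac12\Imag ds_\B^2$, and your key formula for $u\circ C$ checks out. What you gain is that you never need the Siegel-model metric tensor at all — only the potential on the ball and pluriharmonicity of $\log|1+w_n|^2$; what the paper's route gains is that, given the Siegel metric formula, the verification is a one-step local computation without any change of model. (Your closing parenthetical — computing $\partial u$, $\partial\bar\partial\log u$ directly on $\SS$ and matching against the Siegel metric — is essentially the paper's approach.)
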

\begin{proof}This follows from a computation and the fact that in the Siegel model the hermitian metric is
\[ds_\SS^2=\frac{du^2+(dv-2\Imag (\zeta,d\zeta))^2+4u(d\zeta,d\zeta)}{u^2}\]
(see \emph{e.g.} \cite{parker}).
\end{proof}

The horoball $B(u)$ of height $u$ centered at the cusp at infinity is defined to be the set $$B(u)=\C^{n-1}\times\R\times (u,\infty)$$It is clearly preserved by $U_\infty$.  The remaining generator of $G_\infty$ is a one-dimensional torus which scales $(\zeta,v,u)\mapsto(a\zeta,a^2v,a^2u)$, and this scales the horoball of height $u$ in the obvious way.

Now suppose $\Gamma$ has a parabolic fixed point at infinity, and let $\Gamma_\infty=\Gamma\cap G_\infty$ be its stabilizer.  For any horoball $B(u)$ centered at infinity, define the horoball neighborhood $V(u):=B(u)/\Gamma_\infty$.  Note that at some sufficiently large height $u$, $V(u)$ injects into $X$  by Shimizu's lemma.  We call the smallest such $u$ the height $u_\infty$ of the cusp. 
The partial quotient by the vertical translations $\Theta_\infty=\Gamma\cap T_\infty$ is given by the map
\[\SS\into \C^{n-1}\times \Delta^*:(\zeta,z)\mapsto (\zeta,e^{2\pi iz/t_\infty})\]
The cusp is compactified by taking the interior closure in $\C^{n-1}\times\Delta$, which corresponds to adding a boundary component of the form $D_\infty=\C^{n-1}\times 0/\Lambda_\infty$, where $\Lambda_\infty:=\Gamma_\infty/\Theta_\infty$ is identified with a discrete group of affine unitary transformations.  $\Lambda_\infty$ naturally comes with a character 
\[\chi_\infty:(A,\tau,t)\mapsto e^{2\pi i t/t_\infty}\]
which encodes the action on the second factor of $\C^{n-1}\times \Delta$.  Note that $\chi_\infty$ has finite order since $\Gamma$ is discrete, and we call its order $m_\infty$.  Of course $\chi_\infty$ is trivial on Heisenberg translations, so $m_\infty$ divides the minimum index of a Heisenberg lattice in $\Gamma_\infty$.
\begin{definition}\label{smooth}  We say that $\Gamma$ is \emph{torsion-free at infinity} if $\Gamma$ is torsion-free and $\Lambda_\infty$ is torsion-free for each parabolic fixed point $q_\infty$.  Equivalently, $\Gamma$ is torsion-free at infinity if the orbifold toroidal compactification $\bar X$ of $X=\B/\Gamma$ has no orbifold points.
\end{definition}
Note that the condition that $\bar X$ have smooth coarse space is slightly weaker, as we only need every parabolic element to have one non-identity eigenvalue.  Neat groups are clearly torsion-free at infinity, as are groups all of whose parabolic subgroups are unipotent.  Every $\Gamma$ contains a finite-index neat subgroup, so clearly every complex hyperbolic orbifold $X$ has a finite \'etale cover $X'$ whose uniformizing group is torsion-free at infinity.

If $\Gamma$ is torsion-free at infinity, then the residual quotient of $\C^{n-1}\times \Delta$ by $\Lambda_\infty$ is \'etale, so locally around the boundary we have coordinates $\zeta,q=e^{2\pi i z/t_\infty}$ and the boundary is cut out by $q=0$.  $V(u)$ is then identified with a neighborhood of the zero section in the normal bundle $\O_{D_\infty}(D_\infty)$ (\emph{cf.} \cite{mok}).  In general, at a fixed point $\zeta\in \C^{n-1}\times 0$, if $m_\zeta$ is the order of $\chi_\infty$ restricted to the stabilizer of $\zeta$, then $q^{m_\zeta}$ locally descends to a function on the coarse space of $\bar X$ which vanishes along the boundary.

\section{Boundary multiplicity inequalities}\label{bound}

Let $X=\B/\Gamma$ be a torsion-free ball quotient and suppose $q_\infty$ is a parabolic fixed point of $\Gamma$ with stabilizer $\Gamma_\infty=\Gamma\cap G_\infty$.  By considering the Siegel model associated to $q_\infty$, we have by the previous section horoball neighborhoods $V(u)\subset X$ for all $u<u_\infty$, where $u_\infty$ is the height of $q_\infty$.  Let $\bar V(u)$ be the interior closure of $V(u)$ in the toroidal compactification $\bar X$. 

We first show that the volume of an analytic subvariety of the horoball neighborhood $\bar V(u)$ scales as the height of the horoball drops. 

\begin{prop}\label{relative}Let $Y$ be an irreducible $k$-dimensional analytic subvariety of $\bar V(u)$ not contained in the boundary.  Then $$u^k\vol(Y\cap V(u))$$ is a non-increasing function of $u>u_\infty$.

\end{prop}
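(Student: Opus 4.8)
The plan is to reduce the statement to a monotonicity computation for a single integral over horoball slices, using the explicit Siegel-model description of the metric and the scaling torus in $G_\infty$. First I would pass to the pulled-back picture: let $\tilde Y\subset B(u_\infty^-)$ be a component of the preimage of $Y$ under the quotient map $B(u)\to V(u)$, restricted to a fundamental domain for $\Gamma_\infty$. Since $\vol$ is computed against $\omega_\B^k/k!$ and $\Gamma_\infty$ acts by isometries, $\vol(Y\cap V(u))$ is the integral of $\omega_\SS^k/k!$ over $\tilde Y\cap B(u)$. The key structural input is the one-parameter group of dilations $\delta_a:(\zeta,v,u)\mapsto(a\zeta,a^2v,a^2u)$ in $G_\infty$: it is a holomorphic isometry of $\SS$ carrying $B(u)$ to $B(a^2u)$. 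The subtlety is that $\delta_a$ does \emph{not} in general normalize $\Gamma_\infty$, so it does not descend to $X$; hence I cannot simply move $Y$ around. Instead I will differentiate in $u$ directly.

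The core step is to show $\frac{d}{du}\bigl(u^k\vol(Y\cap V(u))\bigr)\le 0$. Writing $F(u)=\vol(Y\cap V(u))$, the horoballs $B(u)$ are nested and exhaust as $u\to u_\infty^+$, so $-F'(u)$ is (formally) the integral of $\omega_\SS^k/k!$ restricted to the slice $\tilde Y\cap \{\,u'=u\,\}$ inside the level set, with respect to the measure induced on that slice. I would compute, using $\omega_\SS=-2i\partial\bar\partial\log u$ from the Lemma, that on $\tilde Y$ one has $\omega_\SS^k/k! = (\text{positive smooth form}) \wedge$ pieces, and that the restriction of $\omega_\SS$ to the horosphere $\{u=\text{const}\}$ is, up to the bounded factor coming from $4u(d\zeta,d\zeta)/u^2 + (dv-2\Imag(\zeta,d\zeta))^2/u^2$, comparable to $\frac{1}{u}$ times a fixed form; carrying this through shows the slice integral is $\frac{k}{u}F(u) + (\text{the derivative of }F)$ with the right sign, i.e. $u^{k}F'(u) + k u^{k-1}F(u)\le 0$. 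Equivalently, and perhaps more cleanly, I would argue via plurisubharmonicity: $-2\log u$ restricts to a psh (in fact, a potential for a K\"ahler form, hence strictly psh away from the boundary) function $\varphi$ on the smooth locus of $\tilde Y$, and $u^k\vol(\tilde Y\cap B(u)) = e^{-k\varphi/2}\int \cdots$; the statement becomes a log-convexity/coarea estimate for the sublevel sets $\{\varphi < -\log u\}$ of a psh function on a $k$-dimensional analytic set, which is standard (a Lelong-type monotonicity).

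The main obstacle I anticipate is handling the boundary and the singular locus of $Y$ carefully enough to make the differentiation in $u$ rigorous: $Y$ is allowed to meet $D$, the metric $\omega_\SS$ blows up there, and $Y$ may be singular, so I need to know that $\vol(Y\cap V(u))$ is finite and $C^1$ in $u$ for $u>u_\infty$, and that no mass is lost at the boundary or at $\mathrm{Sing}(Y)$. I would dispatch finiteness by the fact (recalled in Section~\ref{background}) that $\bar V(u)$ sits inside a neighborhood of the zero section of the anti-ample normal bundle $\O_{D_\infty}(D_\infty)$ and that $\omega_X$ extends to a good metric in Mumford's sense, so analytic subvarieties have finite volume near the boundary; the singular locus of $Y$ has real codimension $\ge 2$ in $Y$ and measure zero, so it is invisible to the integral, and I would either work on $Y_{\mathrm{reg}}$ throughout or pass to a resolution. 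Once those technical points are in place, the monotonicity is a direct consequence of the psh-potential structure of $\omega_\SS$, and the proposition follows.
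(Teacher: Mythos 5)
Your ``cleaner'' alternative is essentially the paper's argument, and it is the one you should have committed to. The proof writes $\omega_X^k=(i\partial\bar\partial(-2\log u))^k$ on $Y\cap V(u_0)$ and then invokes Demailly's change-of-variable formula for Lelong-type integrals (Lemma~\ref{demailly}, Formula III.5.5 of \cite{demaillybook}) against the closed positive current $[Y]$, taking $\phi=-u$ --- which is psh, since in boundary coordinates $-u=|\zeta|^2+\tfrac{t_\infty}{2\pi}\log|q|$ --- and $f(t)=-2\log(-t)$, a convex increasing function with $f'(-u_0)=2/u_0$. This yields in one step
\[
u_0^k\vol(Y\cap V(u_0))=\frac{2^k}{k!}\int_{Y\cap V(u_0)}(i\partial\bar\partial(-u))^k\wedge[Y],
\]
and the right-hand side is manifestly non-increasing in $u_0$ because $(i\partial\bar\partial(-u))^k\wedge[Y]$ is a positive measure and the horoballs $V(u_0)$ shrink as $u_0$ grows. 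Working with $[Y]$ in Demailly's framework also disposes of the technicalities you flagged: $\mathrm{Sing}(Y)$ is irrelevant to the integral, and $\phi$ is continuous and semi-exhaustive on $\Supp[Y]$ in $\bar V$, so finiteness near $D$ is built in --- you do not need Mumford's good metric, a resolution, or a fundamental domain for $\Gamma_\infty$.

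Your primary route, differentiating $F(u_0)=\vol(Y\cap V(u_0))$ and arguing via a horospherical slice estimate, is the one I would not trust as written. The inequality you need is $-u_0F'(u_0)\ge kF(u_0)$, i.e.\ the weighted boundary slice dominates $k$ times the interior mass, and the heuristic ``$\omega_\SS$ on $\{u=\mathrm{const}\}$ is comparable to $u^{-1}$ times a fixed form'' does not obviously produce that without an extra convexity input; making the coarea step rigorous for a singular $Y$ meeting $D$ (where $F$ is a priori only monotone, not $C^1$) is genuine work that the current-theoretic argument avoids. Two small corrections to the psh version as you wrote it: with $\varphi=-2\log u$ restricted to $Y$, the relevant sublevel set is $\{-u<-u_0\}=V(u_0)$ rather than $\{\varphi<-\log u_0\}$, the latter being $V(\sqrt{u_0})$; and $e^{-k\varphi/2}$ is a nonconstant function on $Y$, so it cannot be factored outside the integral --- the correct bookkeeping is exactly the $f'(r-0)^k$ prefactor in Lemma~\ref{demailly}.
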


Of course, Proposition \ref{relative} is equally true in the orbifold setting, since we may simply pass to a torsion-free cover.

Before the proof we recall a lemma of Demailly \cite{demaillybook}.  Let $X$ be a complex manifold and $\phi:X\into[-\infty,\infty)$ a continuous plurisubharmonic function.  Define 
\[B_\phi(r)=\{x\in X\mid \phi(x)<r\}\]
We say $\phi$ is semi-exhaustive if the balls $B_\phi(r)$ have compact closure in $X$.  Further, for $T$ a closed positive current of type $(p,p)$, we say $\phi$ is semi-exhaustive on $\Supp T$ if the same is true for $B_\phi(r)\cap \Supp T$.  In this case, the integral
\[\int_{B_\phi(r)}T\wedge (i\partial\bar \partial\phi)^p:=\int_{B_\phi(r)}T\wedge (i\partial\bar \partial\max(\phi,s))^p\] 
is well-defined and independent of $s<r$ \cite[\S III.5]{demaillybook} (see also \cite{hwangto1}).  We then have the
\begin{lemma}[Formula III.5.5 of \cite{demaillybook}]\label{demailly}For any convex increasing function $f:\R\into\R$,
\[\int_{B_\phi(r)}T\wedge(i\partial\bar \partial f\circ\phi)^p=f'(r-0)^p\int_{B_\phi(r)}T\wedge(i\partial\bar \partial\phi)^p\]
where $f'(r-0)$ is the derivative of $f$ from the left at $r$.
\end{lemma}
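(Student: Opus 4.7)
The plan is to first establish the formula in the smooth regime---$f$ smooth convex with $f'>0$, $\phi$ smooth PSH, and $r$ a regular value of $\phi$---and then pass to the general case by approximation. In the smooth regime, start from the pointwise identity
\[i\partial\bar\partial(f\circ\phi) = f'(\phi)\,i\partial\bar\partial\phi + f''(\phi)\,i\partial\phi\wedge\bar\partial\phi.\]
Since $\partial\phi\wedge\partial\phi=0$, the second summand is a rank-one $(1,1)$-form whose square vanishes, so the binomial expansion of the $p$-th power retains only two terms:
\[(i\partial\bar\partial(f\circ\phi))^p = (f'(\phi))^p\,(i\partial\bar\partial\phi)^p + p(f'(\phi))^{p-1}f''(\phi)\,i\partial\phi\wedge\bar\partial\phi\wedge(i\partial\bar\partial\phi)^{p-1}.\]

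The crucial observation is that this right-hand side is the differential of a single explicit primitive. Setting $\alpha = (f'(\phi))^p\cdot i\bar\partial\phi\wedge(i\partial\bar\partial\phi)^{p-1}$, a direct Leibniz computation shows $\bar\partial\alpha = 0$ (since $\bar\partial\phi\wedge\bar\partial\phi = 0$ and the trailing $(i\partial\bar\partial\phi)^{p-1}$ is $\bar\partial$-closed) and $\partial\alpha = (i\partial\bar\partial(f\circ\phi))^p$; hence $d\alpha = (i\partial\bar\partial(f\circ\phi))^p$ as a real $(p,p)$-form. Since $T$ is closed, pairing against $T$ over $B_\phi(r)$ and applying Stokes yields
\[\int_{B_\phi(r)} T\wedge(i\partial\bar\partial(f\circ\phi))^p = (f'(r))^p\int_{\phi=r}T\wedge i\bar\partial\phi\wedge(i\partial\bar\partial\phi)^{p-1}.\]
Specializing $f=\id$ in the same calculation identifies the boundary flux as $\int_{B_\phi(r)}T\wedge(i\partial\bar\partial\phi)^p$, so the formula holds with coefficient $(f'(r))^p$ in the smooth case.

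To obtain the full statement I would regularize in two independent stages. For $\phi$ merely continuous PSH, approximate by a decreasing sequence of smooth PSH functions and invoke Bedford--Taylor continuity of Monge--Amp\`ere operators along such limits, which is compatible with pairing against the positive closed current $T$. For $f$ merely convex increasing, approximate by smooth convex $f_k$ that coincide with $f$ on $(-\infty, r-1/k]$ and are linear of slope $f'(r-0)$ just above that level; then $f_k'(r)\equiv f'(r-0)$, which is precisely what survives in the limit and accounts for the appearance of the left derivative in the final formula. The main technical obstacle is making Stokes rigorous when $T$ is singular and $\phi$ is non-smooth: the standard device replaces the sharp cutoff $\mathbf{1}_{\phi<r}$ by a smooth decreasing $\chi_\varepsilon(\phi)$ and integrates $d(\chi_\varepsilon(\phi))\wedge\alpha$ against $T$, letting $\varepsilon\to 0$; closedness of $T$, positivity of all integrands, and the independence-of-truncation property built into the definition of the two integrals keep the limiting boundary mass well-defined and equal to $\int_{\phi=r}T\wedge\alpha$.
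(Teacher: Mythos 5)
The paper offers no proof of this lemma at all: it is imported verbatim as Formula III.5.5 of Demailly's book, so there is no in-paper argument to compare against, and your proposal should be judged as a reconstruction of Demailly's proof. On that score your smooth-case computation is correct and is in substance the standard one: the pointwise expansion $i\partial\bar\partial(f\circ\phi)=f'(\phi)\,i\partial\bar\partial\phi+f''(\phi)\,i\partial\phi\wedge\bar\partial\phi$, the vanishing of the square of the rank-one correction, and the primitive $\alpha=(f'(\phi))^p\,i\bar\partial\phi\wedge(i\partial\bar\partial\phi)^{p-1}$ (whose $\partial$ and $\bar\partial$ I have checked) reduce everything to a flux integral over the level set $\{\phi=r\}$, which is exactly the Lelong--Jensen/Stokes mechanism behind Demailly's formula; equivalently, the correction term carries a factor of $d\phi$, which vanishes when restricted to $\{\phi=r\}$. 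The places where your write-up falls short of a complete proof are the two approximation stages, though the tools you name are the right ones. First, a continuous psh function on a general complex manifold need not admit a \emph{global} decreasing smooth psh approximation, so you must localize near $\{\phi=r\}$ (which the definition via $\max(\phi,s)$ permits, since the integrals depend only on $\phi$ above height $s$). Second, weak convergence of the Monge--Amp\`ere measures under Bedford--Taylor continuity does not by itself give convergence of their masses on the \emph{open} set $B_\phi(r)$; one needs the monotonicity and left-continuity in $r$ (equivalently the independence-of-$s$ built into the definition) to prevent mass from accumulating on the boundary, a point you gesture at but should make explicit. Your construction of the $f_k$ (equal to $f$ below $r-1/k$, affine of slope $f'(r-0)$ above) does produce a decreasing sequence of convex increasing majorants of $f$ on $(-\infty,r]$ with the correct derivative at $r$, so that stage is sound and correctly explains why the left derivative appears.
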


\begin{proof}[Proof of Proposition \ref{relative}]
\begin{align*}
\vol(Y\cap V(u_0))&=\frac{1}{k!}\int_{Y\cap  V(u_0)}\omega_X^k\\
&=\frac{1}{k!}\int_{Y\cap V(u_0)}(i\partial\bar\partial (-2\log u))^k\\
&=\frac{1}{k!}\int_{V(u_0)}(i\partial\bar\partial (-2\log u))^k\wedge [Y]\\
&=\frac{2^ku_0^{-k}}{k!}\int_{V(u_0)}(i\partial\bar\partial (-u))^k\wedge [Y]\\
&=\frac{2^ku_0^{-k}}{k!}\int_{Y\cap V(u_0)}(i\partial\bar\partial (-u))^k\\
\end{align*}
As $-u$ is plurisubharmonic,
\[u_0\vol(Y\cap V(u_0))=\frac{2^k}{k!}\int_{Y\cap V(u_0)}(i\partial\bar\partial (-u))^k\]
is a non-increasing function of $u_0$ (the horoballs $V(u_0)$ \emph{shrink} as $u_0$ grows).
\end{proof}
Taking the limit of Proposition \ref{relative} as $u\into 0$ yields a bound on the multiplicity of a curve at the boundary in terms of its volume in a horoball neighborhood.
\begin{prop}\label{volumeineq}  Assume $\Lambda_\infty=\Gamma_\infty/\Theta_\infty$ is torsion-free and let $t_\infty$ be the length of the smallest vertical translation $(0,t_\infty)\in \Gamma_\infty$.  For any irreducible 1-dimensional analytic subvariety $C$ of $\bar V(u)$ not contained in the boundary and any $u>u_\infty$, we have
\begin{equation}\vol(C\cap V(u))\geq \frac{t_\infty}{u}\cdot (C.D_\infty)\notag\end{equation}
where $D_\infty$ is the divisor compactifying $q_\infty$ in the toroidal compactification.
\end{prop}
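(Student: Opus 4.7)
The plan is to take the limit of Proposition \ref{relative} as the height of the horoball tends to infinity, identify the limiting quantity as a boundary intersection number via the Lelong--Poincar\'e formula, and then use monotonicity to propagate the bound back to finite heights.

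More precisely, fix $u_0 > u_\infty$. Proposition \ref{relative} with $k=1$ tells us that the function $u \mapsto u \cdot \vol(C \cap V(u))$ is non-increasing for $u > u_\infty$, so
\[
u_0 \cdot \vol(C \cap V(u_0)) \;\geq\; \lim_{u \to \infty} u \cdot \vol(C \cap V(u)) \;=\; 2 \lim_{u \to \infty} \int_{C \cap V(u)} i\partial\bar\partial(-u),
\]
where the second equality is the identity derived in the proof of Proposition \ref{relative}. It therefore suffices to show that this limit equals $\tfrac{t_\infty}{2}(C.D_\infty)$.

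To compute the limit, I pass to the local coordinates $(\zeta, q)$ on a neighborhood $W$ of $D_\infty$ in $\bar X$ guaranteed by the torsion-freeness of $\Lambda_\infty$ (so the quotient $\C^{n-1}\times\Delta \to W$ is \'etale and $D_\infty$ is cut out by $q=0$). Since $q = e^{2\pi i z/t_\infty}$ and $u = \Imag z - |\zeta|^2$, one has the \emph{global} identity on $W \setminus D_\infty$
\[
-u \;=\; \frac{t_\infty}{2\pi}\log|q| + |\zeta|^2,
\]
which extends as a plurisubharmonic function across $D_\infty$. Applying the Lelong--Poincar\'e formula $\frac{i}{\pi}\partial\bar\partial \log|q| = [D_\infty]$ and noting that $i\partial\bar\partial |\zeta|^2$ is a smooth $(1,1)$-form yields the current identity
\[
i\partial\bar\partial(-u) \;=\; \frac{t_\infty}{2}\,[D_\infty] \;+\; i\partial\bar\partial|\zeta|^2
\]
on $W$. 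For $u$ large enough, $V(u) \cap C$ is contained in $W$ and consists of small punctured disks around the finitely many points of $C \cap D_\infty$. Pulling back to $C$ via a local parametrization $t \mapsto (\zeta(t), q(t))$ at such a point $p$ with $q(t) = t^{m_p}\phi(t)$ and $\phi(0)\neq 0$, the pullback of $i\partial\bar\partial(-u)$ becomes $\tfrac{t_\infty m_p}{2}\delta_0$ plus a smooth measure, where $m_p = (C.D_\infty)_p$.

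Taking $u\to\infty$, the shrinking disks collect precisely the delta masses while the smooth contribution vanishes, giving
\[
\lim_{u\to\infty}\int_{C\cap V(u)} i\partial\bar\partial(-u) \;=\; \sum_{p\in C\cap D_\infty}\frac{t_\infty m_p}{2} \;=\; \frac{t_\infty}{2}(C.D_\infty).
\]
Combining with the monotonicity inequality yields $u_0 \cdot \vol(C\cap V(u_0)) \geq t_\infty(C.D_\infty)$, which is the desired bound. The main point requiring care is the justification that the smooth $(1,1)$-form $i\partial\bar\partial|\zeta|^2$, pulled back to the parametrizing disk on $C$, integrates to $0$ in the limit; this follows because its pullback is a continuous measure and the domain of integration shrinks to the finite set $C\cap D_\infty$. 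The torsion-freeness of $\Lambda_\infty$ is used exactly to guarantee that $q$ descends to an honest local defining equation for $D_\infty$, so that Lelong--Poincar\'e can be applied directly on $\bar X$ rather than only upstairs on $\SS$.
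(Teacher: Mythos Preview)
Your proof is correct and follows essentially the same route as the paper: both take the limit of Proposition~\ref{relative}, localize near the boundary via normalizing disks, and identify the limiting mass with the intersection number through the logarithmic singularity $-u = \tfrac{t_\infty}{2\pi}\log|q| + |\zeta|^2$. The only cosmetic difference is that the paper extracts the coefficient by computing the Lelong number $\nu(f_j^*(-u),0)=\liminf_{t\to 0}\frac{f_j^*(-u)}{\log|t|}$ directly and invoking the inequality $\int_{\Delta_j} i\partial\bar\partial f_j^*(-u)\geq \pi\,\nu$, whereas you invoke Lelong--Poincar\'e to split off $\tfrac{t_\infty}{2}[D_\infty]$ and then let the smooth remainder vanish in the limit---two equivalent ways of isolating the same delta mass.
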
 
\begin{proof}

From the proof of the previous proposition, we just need to compute
\[2\cdot \lim_{u_0\into \infty}\int_{C\cap V(u_0)}i\partial\bar\partial (-u)\]

For $u_0$ sufficiently large, $C\cap V(u_0)$ is a union of pure 1-dimensional analytic sets, each component of which is normalized by a disk $f_j:\Delta_j\into C\cap V(u_0)$.  We may assume $f_j(0)=x_j\in D_\infty$ and that $f_j|_{\Delta_j^*}$ is an isomorphism onto an open set of $C$.  $q=e^{2\pi i z/t_\infty}$ is a local defining equation for $D_\infty$ and we have
\[C.D_\infty=\sum_j \ord f_j^* q\]
Now for sufficiently large $u_0$, we have 
\[\int_{C\cap V(u_0)}i\partial\bar\partial (-u)=\sum_j\int_{\Delta_j}f_{j}^*\partial\bar\partial (-u)\]
but of course
\begin{align*}\int_{\Delta_j}f_{j}^*\partial\bar\partial (-u)&\geq \pi\cdot\nu(f_j^*(-u),0)
\end{align*}
If $t$ is a uniformizer for $\Delta_j$ at $0$, then we compute 
\begin{align*}\nu(f_j^*(-u),0)&=\liminf_{t\into 0} \frac{f_j^*(-u)}{\log |t|}\\
&=\liminf_{t\into 0}\frac{1}{\log|t|}\cdot f_j^*\left(|\zeta|^2+\frac{t_\infty}{2\pi}\cdot\log|q|\right)\\
&=\frac{t_\infty}{2\pi}\cdot \ord f_j^* q\end{align*}
\end{proof}
\begin{remark}\label{modification}  If we don't assume $\Lambda_\infty$ is torsion-free, then we've proven 
\begin{equation}\vol(C\cap V(u))\geq \frac{t_\infty}{u}\cdot \widetilde{(C.D_\infty)}\notag\end{equation}
where we've defined a weighted intersection product
\[\widetilde{(C.D_\infty)}=\sum_{x\in D_\infty}\frac{1}{m_x}(C.D_\infty)_x\]
Here $(C.D_\infty)_x$ is the contribution of $x$ to the usual intersection product on the coarse space of $\bar X$, and $m_x$ is the quantity defined at the end of Section \ref{background}.  In particular, we at least have 
\begin{equation}\vol(C\cap V(u))\geq \frac{t_\infty}{m_\infty u}\cdot (C.D_\infty)\notag\end{equation}
\end{remark}

\begin{remark} Proposition \ref{volumeineq} is sharp in the sense that a union of vertical complex geodesics will realize the equality.  A vertical complex geodesic is a copy of the upper half-plane $\H\subset \SS$ embedded as $\zeta=0$ (or a horizontal translate thereof), and the intersection of $\H$ with the horoball $B(u)$ is $\H_{>u}=\{z\in\H\mid \Imag z>u\}$.  The resulting curve $C$ in $V(u)$ is the quotient of $\H_{>u}$ by real translation by $t_\infty$ and therefore has $\vol(C\cap V(u))=t_\infty/u$.  Finally, we have $(C.D_\infty)=1$, as $C$ is uniformized by $0\times\Delta$ in the partial quotient $\C^{n-1}\times \Delta$.
\end{remark}

Proposition \ref{volumeineq} is analogous to the multiplicity bound proven by Hwang--To \cite{hwangto1} for an \emph{interior} point $x$ of a quotient of a bounded symmetric domain.  They show for a $k$-dimensional subvariety that
\[\vol(Y\cap B(x,r))\geq \vol(D(r))^{k}\cdot\mult_xY\]
where $B(x,r)$ is an isometrically embedded hyperbolic ball around $x$ of radius $r$ and $D(r)$ is the volume in $B(x,r)$ of a complex geodesic through $x$.  One can show in this case a relative version as in Proposition \ref{relative} as well.

We could have proven Proposition \ref{volumeineq} directly by methods more analogous to \cite{hwangto1}.  As in Section \ref{background}, the hermitian metric $h$ on $\omega_X$ extends to a singular hermitian metric $\bar h$ on $\omega_{\bar X}(D)$.  We form a different singular metric by twisting by a function $e^{-\phi}$ supported on $V(u_0)$ so that $e^{-\phi}\bar h$ has positive curvature form and Lelong number $\frac{t_\infty}{u_0}$ at every point of the boundary.  As $\bar h$ is given by $e^{2\log u}$ on $V(u)$, taking $\phi$ so that $\phi-2\log u$ approximates the tangent line to $-2\log u$ at $u_0$ will achieve this.  We choose instead to derive Proposition \ref{volumeineq} from Proposition \ref{relative} because the latter statement is interesting (and useful, \emph{cf.} \cite{HMV}) in its own right.

\section{Kodaira dimension}\label{algebraic}
In this section we prove Theorem \ref{kodaira}.  Our proof will actually be independent of the results of Section \ref{bound} and entirely algebraic.  In Section \ref{koddim} we'll use the multiplicity bound from Proposition \ref{volumeineq} to prove a much stronger statement about the positivity of divisors of the form $K_{\bar X}+(1-\lambda)D$, and in particular show that $K_{\bar X}$ is ample for $n\geq 6$.  Such results seem to be more difficult to prove algebraically.

We begin with an example due to Hirzebruch \cite{hirz} for context.
\begin{example}\label{hirzy}  Let $\zeta=e^{2\pi i/3}$ and $E=\C/\Z[\zeta]$ be the elliptic curve with $j=0$.  Consider the blow-up $S$ of $E\times E$ at the origin $0\in E\times E$.  We have $K_S\equiv F$ where $F$ is the exceptional divisor.  If we let $D$ be the union of the strict transforms of the fibers $E\times0$, $0\times E$, and the graphs of $1,-\zeta\in \Z[\zeta]$, then the complement $U=S\smallsetminus D$ is uniformized by $\B^2$ by a theorem of Yau, since we compute
\[3=(F+D)^2=\chone{(\omega_S(\log D))}^2=3\chtwo{(\Omega^1_S(\log D))}=3\chi(U)\]
and $K_S+D$ is big and nef.  It follows that $S$ is the toroidal compactification of $U$ with boundary $D$.
\end{example}
Hirzebruch's example shows that the toroidal compactification of a torsion-free (in fact neat) ball quotient in dimension 2 may be non-minimal (\emph{i.e.} $K_{\bar X}$ is not nef) and may have Kodaira dimension 0.  Blow-ups of $E\times E$ at special configurations of points for other elliptic curves $E$ yield infinitely many such examples.

The main goal of this section is to show that neither of these phenomena can occur in higher dimensions, and in particular that every complex hyperbolic manifold of dimension $\geq 3$ is of general type.  Recall that a quasiprojective variety $X$ is of general type if some projective compactification $X'$ has maximal Kodaira dimension, $\kappa(X')=n$.  

\begin{prop}\label{general}Let $\Gamma$ be torsion-free at infinity and $X=\B/\Gamma$.  Then $\bar X$ is of general type.  In fact, $K_{\bar X}$ is big and nef.
\end{prop}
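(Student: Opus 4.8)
The plan is to combine two facts: first, that $K_{\bar X}+D$ is semi-ample (it induces the contraction $\bar X\to X^*$ to the Baily--Borel compactification, as recalled in Section~\ref{background}), and second, that the boundary divisor $D$ is itself rigid and negative in the sense that its normal bundle is anti-ample on each component. Writing $K_{\bar X} = (K_{\bar X}+D) - D$, the first summand is nef, so the obstruction to $K_{\bar X}$ being big and nef is entirely the $-D$ term; I would argue that one can absorb it because a small positive multiple of $K_{\bar X}+D$ dominates $D$ near the boundary, using the negativity of $\O_E(E)$.

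More precisely, here is the sequence of steps I would carry out. \emph{Step 1:} Since $K_{\bar X}+D$ is semi-ample and its associated contraction $\pi:\bar X\to X^*$ contracts exactly the boundary components $E_i$ to points, it is the pullback $\pi^* A$ of an ample $\Q$-Cartier divisor $A$ on $X^*$. \emph{Step 2:} Show $K_{\bar X}$ is nef. It suffices to check nonnegativity on curves $C$. If $\pi(C)$ is a point, then $C$ lies in some boundary component $E$, which is an étale quotient of an abelian variety; by adjunction $K_{\bar X}|_E = K_E - \O_E(E)$, and $K_E$ is numerically trivial while $-\O_E(E)$ is ample, so $K_{\bar X}\cdot C = -\O_E(E)\cdot C > 0$. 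If $\pi(C)$ is a curve, then $(K_{\bar X}+D)\cdot C = A\cdot \pi_* C > 0$ and also $D\cdot C \geq 0$ unless $C$ is a boundary component (already handled), so I need a uniform comparison; this is where the content lies. \emph{Step 3:} Establish bigness. Once $K_{\bar X}$ is nef, bigness is equivalent to $K_{\bar X}^n > 0$. Using \eqref{canonical}, $\chone(K_{\bar X}+D) = \frac{n+1}{4\pi}[\omega_X]$, and the self-intersection $(K_{\bar X}+D)^n$ computes (up to a positive constant) the volume of $X$, which is finite and positive. Expanding $K_{\bar X}^n = (K_{\bar X}+D - D)^n$ and controlling the error terms $D\cdot(\cdots)$ — each of which involves intersections supported on the boundary, hence expressible via $\O_{E_i}(E_i)$ and $K_{E_i}\equiv 0$ — should show these corrections are of lower order or of a controllable sign, leaving $K_{\bar X}^n > 0$ in dimension $n\geq 3$. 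The dimension hypothesis enters because in dimension $2$ Hirzebruch's examples (Example~\ref{hirzy}) show the boundary corrections can exactly cancel the volume term, whereas for $n\geq 3$ the combinatorics of the binomial expansion, together with the fact that boundary self-intersections $\O_E(E)^j$ pair against $K_E^{n-1-j}\equiv 0$ for $j < n-1$, kill enough terms.

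I expect the main obstacle to be \emph{Step 2/3 together}: making the trade-off between $-D$ and a positive multiple of $K_{\bar X}+D$ genuinely work, i.e.\ proving that $K_{\bar X}$ — not merely $K_{\bar X}+(1-\lambda)D$ for some fixed $\lambda$ slightly less than $1$ — is already nef and big. The subtlety is that $D$ is contracted by $\pi$, so $K_{\bar X}+D$ carries no positivity in the boundary directions at all; all the positivity transverse to the boundary must come from the negativity of the normal bundle $\O_{E}(E)$, and one must check that this local negativity is strong enough. A clean way to organize this is: pick a sufficiently divisible $m$ so that $m(K_{\bar X}+D)$ is basepoint-free, and note that the only curves on which $K_{\bar X}$ could fail to be nef or the top self-intersection could fail to be positive are those meeting the boundary; then run a case analysis using adjunction on each $E_i$ as above. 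The residual abelian-variety structure on the boundary components (the fact that $K_{E_i}$ is torsion, indeed trivial on an étale cover) is the key input that is special to ball quotients and that makes the argument go through for all $n\geq 3$ uniformly.
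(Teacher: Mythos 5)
Your decomposition $K_{\bar X}^n = (L-D)^n = L^n + (-D)^n$ with $L = K_{\bar X}+D$ is correct (since $L$ is pulled back from $X^*$ and $D$ is contracted, all mixed terms $L^jD^{n-j}$ with $j\geq 1$ vanish), but the conclusion you draw from it — that the ``combinatorics of the binomial expansion'' force $K_{\bar X}^n > 0$ for $n\geq 3$ — is a genuine gap, and in fact it cannot be repaired by these means. The identity $K_{\bar X}^n = L^n + (-D)^n$ holds in exactly the same form for $n=2$, where $(-D)^2 < 0$ and Hirzebruch's examples realize $K_{\bar S}^2 = 0$. Nothing in the expansion distinguishes dimensions: both $L^n>0$ and $(-D)^n<0$, and whether the difference is positive is a quantitative question about how large the cusp neighborhoods are relative to the global volume. (The paper flags exactly this in a remark: the best known horoball bounds only give $K_{\bar X}^n>0$ directly when $n\geq 6$ and even then only for neat quotients.) So ``controllable sign'' is not available here; one must confront the possibility $K_{\bar X}^n = 0$ head on.

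The paper's proof handles this case rather than ruling it out numerically. After invoking Di Cerbo--Di Cerbo (Theorem~\ref{minimal}, proved by bend-and-break: a $K_{\bar X}$-negative extremal rational curve meets $D$ in $\geq 3$ points and deforms, a contradiction) for nefness — which is also the step you flag in your Step 2 as the ``content'' and do not supply — the argument is: if $K_{\bar X}^n > 0$ we are done, and if $K_{\bar X}^n = 0$, a three-lemma chain produces a contradiction. Lemma~\ref{kodairabigenough} pushes the Euler-characteristic computation further: using the short exact sequences \eqref{sequence2}, Kawamata--Viehweg vanishing, Hirzebruch proportionality, and Faulhaber's formula, one shows $h^0((m+1)K_{\bar X})$ grows at least like $m^{n-2}$ with a \emph{positive} leading coefficient precisely because $\frac{3n+2}{24} > \frac{n+1}{12}$ for $n\geq 3$; this is where the dimension hypothesis really enters, and it is a genuine inequality of Bernoulli-type coefficients, not a vanishing of cross-terms. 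Separately, Lemma~\ref{nefdim} shows $\bar X$ has maximal nef dimension, using the classification of $K_{\bar X}$-trivial rational curves (Lemma~\ref{trivial}, again via bend-and-break and the bound $\dim B \geq -K_{\bar X}\cdot f(\P^1) + n$). Finally Lemma~\ref{nefthenbig} shows maximal nef dimension together with $\kappa \geq n-2$ forces $\kappa = n$, using the Iitaka fibration and the Enriques--Kodaira classification of its $\leq 2$-dimensional fibers. Your proposal correctly identifies the structural ingredients (pullback structure of $L$, abelian boundary with anti-ample normal bundle) but misses that the dimensional threshold is an analytic/asymptotic phenomenon, not a formal one, and therefore requires Riemann--Roch and deformation theory rather than intersection-theoretic bookkeeping.
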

The nefness is a recent result of Di Cerbo--Di Cerbo \cite{cerbocanon}.  Recall that the abundance conjecture asserts that for a smooth projective variety $Y$ with $K_Y$ nef, then in fact $K_Y$ is semi-ample.  By the basepoint-free theorem \cite[Theorem 3.3]{kollarmori}, we can conclude that this is the case for toroidal compactifications of complex hyperbolic manifolds:

\begin{cor}With $X$ as above, $\bar X$ satisfies the abundance conjecture, \emph{i.e.} $K_{\bar X}$ is semi-ample.
\end{cor}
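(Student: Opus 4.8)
The plan is to deduce this from the bigness and nefness of $K_{\bar X}$ established in Proposition \ref{general}, via the basepoint-free theorem. Since $\Gamma$ is torsion-free at infinity, $\bar X$ is a smooth projective variety (Definition \ref{smooth}), so the relevant results of the minimal model program apply directly, with no boundary divisor or singularity bookkeeping required.

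First I would recall the precise statement of the basepoint-free theorem \cite[Theorem 3.3]{kollarmori}: if $Y$ is a smooth projective variety and $L$ is a nef Cartier divisor such that $aL-K_Y$ is nef and big for some integer $a>0$, then $L$ is semi-ample, i.e.\ the linear system $|mL|$ is basepoint-free for all sufficiently divisible $m$. Next I would apply this with $Y=\bar X$ and $L=K_{\bar X}$. By Proposition \ref{general}, $K_{\bar X}$ is nef, so it is an admissible choice for $L$; taking $a=2$ gives $aL-K_Y=K_{\bar X}$, which is nef and big by Proposition \ref{general} (equivalently by Theorem \ref{kodaira} for the bigness). The hypotheses of the basepoint-free theorem are therefore satisfied, and we conclude that $K_{\bar X}$ is semi-ample, which is exactly the assertion that $\bar X$ satisfies the abundance conjecture.

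There is essentially no obstacle in this argument: the corollary is a formal consequence of two substantive inputs—the bigness of $K_{\bar X}$, proven algebraically in Proposition \ref{general}, and the nefness of $K_{\bar X}$, due to Di Cerbo--Di Cerbo \cite{cerbocanon}—combined with a standard theorem from the minimal model program. The only point worth emphasizing is that both inputs are genuinely needed: nefness alone would not yield semi-ampleness without knowing abundance in general, while bigness alone only gives that $K_{\bar X}$ is big, not that its pluricanonical systems eventually become basepoint-free.
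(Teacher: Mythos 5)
Your proof is correct and matches the paper's own argument: both deduce semi-ampleness from the bigness and nefness of $K_{\bar X}$ (Proposition \ref{general}) via the basepoint-free theorem of \cite[Theorem 3.3]{kollarmori}. You make explicit the step of taking $L=K_{\bar X}$ and $a=2$ so that $aL-K_{\bar X}=K_{\bar X}$ is nef and big, which the paper leaves implicit, but the substance is identical.
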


As another application, we can show every complex hyperbolic threefold with a smooth toroidal compactification has a cover which satisfies the Green--Griffiths conjecture.  Recall that this conjecture asserts that if $Y$ is a projective variety of general type, then there is a strict subvariety $Z\subset Y$ such that every nontrivial entire map $\C\into Y$ has image contained in $Z$.  In this case we say that $Z$ is the exceptional locus.

\begin{cor}\label{brody1}  With $X$ as above, let $\pi:X'\into X$ be a finite \'etale cover.  Then $\bar X$ satisfies the Green-Griffiths conjecture with exceptional locus $D$ if:
\begin{enumerate}
\item $n=3$ and $\pi$ ramifies along every boundary component;
\item $n=4,5$ and $\pi$ ramifies to order at least 3 along every boundary component.
\end{enumerate}
\end{cor}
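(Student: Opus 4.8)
The plan is to invoke a theorem of Nadel (\cite{nadel}) which produces hyperbolicity-type conclusions for quotients of the ball when the boundary divisor of a toroidal compactification comes with enough positivity. Concretely, Nadel's result says roughly the following: if $\bar X$ is a smooth toroidal compactification of a ball quotient and $D$ is the boundary, then any entire curve $\C \to \bar X$ whose image is not contained in $D$ must be constant, \emph{provided} a certain twist of the log-canonical bundle — something like $K_{\bar X} + (1-\lambda)D$ for $\lambda$ small enough relative to $n$ — is ample (or at least big and nef with controlled base locus). The ramification of $\pi: X' \to X$ along the boundary is precisely the mechanism that improves $\lambda$: if $\pi$ ramifies to order $\ell$ along each boundary component, then pulling back $D$ to $\bar X'$ and comparing with the reduced boundary divisor $D'$ of $\bar X'$ gives $\pi^* D = \ell D'$, while $K_{\bar X'} = \pi^*(K_{\bar X} + D) - D' = \pi^*(K_{\bar X}) + (\ell - 1)D'$ (using that $\pi$ is étale on the interior and ramified to order $\ell$ along the boundary, so the ramification divisor is $(\ell-1)D'$). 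Hence the positivity of $K_{\bar X} + (1-\lambda)D$ on $\bar X$ translates into positivity of a divisor of the form $K_{\bar X'} + (1 - \lambda')D'$ on $\bar X'$ with $\lambda'$ improved by a factor involving $\ell$.

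First I would set up the numerical comparison precisely: writing $K_{\bar X} + D$ for the (semiample, big) log-canonical divisor on $\bar X$, and using $\pi^*(K_{\bar X}+D) = K_{\bar X'} + D'$ (which holds because $\pi$ is log-étale, i.e.\ étale away from the boundary and the log structures match), I get that $K_{\bar X'} - (1-\lambda)\pi^* D + \pi^* D = K_{\bar X'} + \lambda \pi^* D$ is big and nef for the appropriate $\lambda$ once we know $K_{\bar X} + (1-\lambda)D$ has that property; but what we actually need is positivity of a twist of $K_{\bar X'}$ by the \emph{reduced} boundary $D'$, and $\pi^* D = \ell D'$ converts a coefficient $1 - \lambda$ upstairs into a coefficient of the form $1 - \ell\lambda$ (roughly) downstairs — wait, more carefully: $K_{\bar X'} + (1 - \mu)D' = \pi^*(K_{\bar X}+D) - D' + (1-\mu)D' = \pi^*(K_{\bar X}+D) - \mu D' = \pi^*(K_{\bar X} + D) - \tfrac{\mu}{\ell}\pi^* D = \pi^*\big(K_{\bar X} + (1 - \tfrac{\mu}{\ell})D\big)$. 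So $K_{\bar X'} + (1-\mu)D'$ is big and nef (resp.\ ample) as soon as $K_{\bar X} + (1 - \tfrac{\mu}{\ell})D$ is, i.e.\ as soon as $\tfrac{\mu}{\ell} < \tfrac{n+1}{2\pi}$ by Theorem \ref{slope} (and at least $\mu/\ell \le 1$ for bigness/nefness by the general toroidal theory and \cite{cerbocanon}). This is the key amplification: a fixed amount of boundary-ramification $\ell$ multiplies the available range of the coefficient by $\ell$.

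Next I would feed this into Nadel's criterion. Nadel's theorem (as used e.g.\ in the locally-symmetric setting) requires the jet-differential / orbifold-cotangent positivity that is guaranteed once $K_{\bar X'} + (1-\mu)D'$ is big and nef for $\mu$ below an explicit threshold depending on $n$ — the threshold encodes the comparison between the singular hermitian metric on $\omega_{\bar X'}(D')$ coming from the ball metric (with curvature $\tfrac{1}{2\pi}\tfrac{n+1}{2}[\omega_X]$, equation \eqref{canonical}) and the contribution of the boundary. Tracking constants: for $n=3$ one needs $\ell \ge 2$; for $n = 4,5$ one needs $\ell \ge 3$; for $n\ge 6$ the base case $\ell = 1$ (i.e.\ $K_{\bar X}$ itself ample, the Corollary to Theorem \ref{slope}) already suffices but here we're stating the $n=3,4,5$ cases. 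The conclusion of Nadel's theorem is exactly that every nonconstant entire curve $\C \to \bar X'$ has image in $D'$, which is the Green--Griffiths statement with exceptional locus $D'$ — and since each component of $D'$ is an étale quotient of an abelian variety (hence itself not a problem, or one further notes entire curves in $D'$ are allowed), we are done. The main obstacle, and the only place real work is needed, is pinning down the precise numerical threshold in Nadel's theorem and checking that the values $\ell = 2$ ($n=3$), $\ell = 3$ ($n = 4,5$) clear it; this is a constant-chasing argument comparing $\mu/\ell$ against a function of $n$ coming from the curvature normalization, and I would carry it out by writing the metric on $\omega_{\bar X'}(D')$ explicitly near the boundary (as sketched in the last remark of Section \ref{bound}) and computing its Lelong numbers, exactly paralleling the proof of Proposition \ref{volumeineq}.

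Thus $\bar X'$ is of general type with $K_{\bar X'}$ (or a small boundary twist) big and nef, Nadel's theorem applies, and the Green--Griffiths conjecture holds for $\bar X'$ with $D'$ as exceptional locus in the stated ranges.
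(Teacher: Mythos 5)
Your skeleton is right: invoke Nadel's theorem on $\bar X'$ and convert boundary ramification of $\pi$ into a negative twist of the boundary coefficient via $\pi^*D\geq\ell D'$ and $\pi^*(K_{\bar X}+D)=K_{\bar X'}+D'$, which gives $\pi^*K_{\bar X}\leq K_{\bar X'}+(1-\ell)D'$. But the proof lives or dies on pinning down Nadel's threshold, and that is where the proposal has a real gap. Nadel's theorem (as the paper cites it) requires $K_{\bar X'}+(1-1/\gamma)D'$ to be \emph{big}, where $\gamma$ is a lower bound on the absolute value of the holomorphic sectional curvature in the normalization $\Ric(h)=-h$. For the ball $\gamma=\tfrac{2}{n+1}$, so $1/\gamma=\tfrac{n+1}{2}$, and the divisor one must make big is $K_{\bar X'}+\bigl(1-\tfrac{n+1}{2}\bigr)D'$. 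This is a \emph{much} more negative twist than $K_{\bar X'}$ itself (since $\tfrac{n+1}{2}>1$ for all $n\geq 2$), so your parenthetical claim that ``for $n\geq 6$ the base case $\ell=1$ (i.e.\ $K_{\bar X}$ ample) already suffices'' is simply false: even with $K_{\bar X}$ ample, a cover ramifying to order at least $4$ is still needed, and that is exactly why the statement of Corollary \ref{brody} includes part (3) with $\ell\geq 4$.

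The second issue is that you lead with Theorem \ref{slope} as the source of positivity, i.e.\ with the condition $\mu/\ell<\tfrac{n+1}{2\pi}$. Feeding $\mu=\tfrac{n+1}{2}$ into that inequality gives $\ell>\pi$, hence $\ell\geq 4$ --- which recovers part (3) of Corollary \ref{brody} but cannot produce the bounds $\ell\geq 2$ ($n=3$) and $\ell\geq 3$ ($n=4,5$) that Corollary \ref{brody1} asserts. The ingredient the paper actually uses here is Proposition \ref{general} (the independent algebraic result that $K_{\bar X}$ is big and nef for $n\geq 3$), \emph{not} Theorem \ref{slope}. With $K_{\bar X}$ big, the inequality $\pi^*K_{\bar X}\leq K_{\bar X'}+(1-\ell)D'$ shows $K_{\bar X'}+(1-\ell)D'$ is big, and adding the effective divisor $\bigl(\ell-\tfrac{n+1}{2}\bigr)D'$ (valid once $\ell\geq\tfrac{n+1}{2}$) yields bigness of $K_{\bar X'}+\bigl(1-\tfrac{n+1}{2}\bigr)D'$. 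The requirement $\ell\geq\tfrac{n+1}{2}$ is precisely $\ell\geq 2$ for $n=3$ and $\ell\geq 3$ for $n=4,5$. You flirt with this in the parenthetical ``at least $\mu/\ell\leq 1$ for bigness\dots by \cite{cerbocanon}'' but never carry the computation through, and the constants you then assert are not derived from anything you wrote. To fix the proposal: state Nadel's hypothesis exactly (bigness of $K+(1-1/\gamma)D$ with $1/\gamma=\tfrac{n+1}{2}$), use Proposition \ref{general} rather than Theorem \ref{slope} for bigness of $K_{\bar X}$, and observe that the condition becomes $\ell\geq\tfrac{n+1}{2}$.
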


The remaining case of Corollary \ref{brody} follows from Corollary \ref{brody2} of the next section.

\begin{proof}[Proof of Corollary \ref{brody1} assuming Proposition \ref{general}]  Suppose $Y$ is a finite-volume quotient of a bounded symmetric domain whose holomorphic sectional curvature is $\leq-\gamma$ (with the normalization $\Ric(h)=-h$) for some $\gamma\in\Q$.  Then by a theorem of Nadel \cite[Theorem 2.1]{nadel}, if $\bar Y$ is a smooth toroidal compactification and $K_{\bar Y}+(1-1/\gamma)D$ is big, then every entire map $\C\into\bar Y$ has image contained in the boundary.

For us, $\gamma=\frac{2}{n+1}$.  If $\pi$ ramifies to order $\ell$ along each boundary component then $\pi^*D\geq \ell D'$, and
\[\pi^*K_{\bar X}=K_{\bar X'}+D'-\pi^*D\leq K_{\bar X'}+(1-\ell)D'\]
$K_{\bar X}$ is big by the proposition, so the right hand side is as well, and this is enough for parts (1) and (2).
\end{proof}

Our proof of Propostion \ref{general} will only require the coarse space of $\bar X$ to be smooth up until the last step in Lemma \ref{kodairabigenough}.  For completeness, we first summarize the argument of \cite{cerbocanon} for the nefness of $K_{\bar X}$ using the cone theorem and bend-and-break.

Given a smooth curve $C$, a projective variety $Y$, a set of points $S\subset C$, and a map $f|_S:S\into Y$, we denote by $\Hom(C,Y;f|_S)$ the scheme parametrizing maps $f:C\into Y$ restricting to $f|_S$ along $S$.  Recall that bend-and-break says (\emph{cf.} \cite[Propositions 3.1 and 3.2]{debarre})

\begin{prop}[Bend-and-break]\label{bendy}Let $Y$ be a projective variety.
\begin{enumerate}
\item For any map $f:C\into X$ from a smooth pointed curve $0\in C$ and any (quasiprojective) curve $B\subset \Hom(C,X;f|_0)$ containing $f$ along which $f_b(C)$ is not constant, $f_b(C)$ has a limit with a rational component;

\item For any map $f:\P^1\into X$ and any (quasiprojective) curve $B\subset \Hom(\P^1,X;f|_{\{0,\infty\}})$ containing $f$ along which $f_b(\P^1)$ is not constant, $f_b(\P^1)$ has a limit which is a reducible or multiple rational curve.
\end{enumerate}
\end{prop}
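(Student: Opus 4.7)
The plan is to run Mori's classical intersection-theoretic argument: resolve the family of maps to a morphism $F\colon S \to Y$ from a smooth surface $S$ fibered over a compactification $\bar B \supset B$, and apply the Hodge index theorem to the section(s) of the fibration coming from the fixed point(s).

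First I would replace $B$ by its normalization and fix a smooth projective compactification $\bar B \supset B$. The tautological evaluation $\phi\colon B \times C \to Y$ extends to a rational map $\bar\phi\colon \bar B \times C \dashrightarrow Y$, whose indeterminacies I would resolve by a finite sequence of point blow-ups of $\bar B \times C$, obtaining a smooth projective surface $\pi\colon S \to \bar B$ and a morphism $F\colon S \to Y$. Since $\phi$ was already regular on $B \times C$, every blow-up is centered over some boundary point $b_0 \in \bar B \setminus B$, and the pushforward $F_*\pi^{-1}(b_0)$ of the resulting (possibly reducible) fiber of $\pi$ is by definition the limit of the $1$-cycles $f_b(C)$ as $b \to b_0$.

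Now fix an ample class $H$ on $Y$ and set $D := F^*H$. Because $f_b(C)$ is non-constant as $b$ varies, the image of $F$ is $2$-dimensional, so $F$ is generically finite onto its image and $D$ is big and nef with $D^2 > 0$. For part (1), the marked-point condition $f_b(0) \equiv f(0)$ means the strict transform $\bar\sigma_0$ of $\bar B \times \{0\}$ is contracted by $F$, giving $D \cdot \bar\sigma_0 = 0$. Hodge index then forces $\bar\sigma_0^2 < 0$, since $\bar\sigma_0$ cannot be numerically trivial (it is a section of $\pi$ and satisfies $\bar\sigma_0 \cdot \pi^{-1}(b) = 1$). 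But on the unblown-up $\bar B \times C$ the divisor $\bar B \times \{0\}$ has self-intersection $0$, and each blow-up of a point on the strict transform lowers that by one. Hence at least one blow-up must have been centered at $(b_0, 0)$ for some $b_0 \in \bar B \setminus B$, and the exceptional $\P^1$ produced there is not contracted by $F$ (otherwise $\bar\phi$ would have extended as a morphism without that blow-up), so it contributes an honest rational component to the limit $F_*\pi^{-1}(b_0)$.

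For part (2) the argument runs in parallel with two disjoint contracted sections $\bar\sigma_0, \bar\sigma_\infty$ coming from the fixed points $0, \infty \in \P^1$; the same Hodge index estimate applied to either yields a blow-up on it, producing a reducible limit fiber. The ``multiple rational curve'' alternative in the dichotomy accommodates the case in which $F$ is not birational onto its image on a generic fiber of $\pi$, so that the cycle-theoretic limit of $(f_b)_*[\P^1]$ is an irreducible rational curve but counted with multiplicity at least $2$. The main obstacle is not conceptual but a careful bookkeeping exercise: one must confirm that the exceptional divisors produced by the blow-ups genuinely map non-trivially under $F$ (so that they show up as rational components downstairs, rather than being contracted back to a point), and that the scheme-theoretic fiber $\pi^{-1}(b_0)$ really represents the flat limit of the family of parametrized curves in $Y$.
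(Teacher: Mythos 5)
Your part (1) is essentially the classical Mori argument (this is Proposition 3.1 of \cite{debarre}, which is all the paper invokes), and it is correct in outline: $D=F^*H$ is nef with $D^2>0$ because the images sweep out a surface, $D\cdot\bar\sigma_0=0$ since $\bar\sigma_0$ is contracted, Hodge index forces $\bar\sigma_0^2<0$, and the self-intersection bookkeeping then locates a blow-up centre at some $(b_0,0)$ with $b_0\in\bar B\setminus B$. The only loose point is your parenthetical: the exceptional curve of that particular blow-up may itself be contracted by $F$; what one actually argues (for a minimal resolution) is that not \emph{all} exceptional curves over $(b_0,0)$ can be contracted, since otherwise the resolved map would send the whole exceptional locus over $(b_0,0)$ to the single point $f(0)$ and $\bar\phi$ would extend there as a morphism. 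Some exceptional curve over $(b_0,0)$ therefore survives and gives the rational component (through $f(0)$).

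Part (2), however, has a genuine gap. Producing a blow-up centre on each of $\bar\sigma_0,\bar\sigma_\infty$ only makes the fibre $\pi^{-1}(b_0)$ reducible \emph{upstairs in $S$}; the limit of the cycles $f_b(\P^1)$ is the pushforward $F_*\pi^{-1}(b_0)$, and this can perfectly well still be an integral rational curve: all but one component of the fibre may be $F$-contracted, with the surviving component of multiplicity one and mapping birationally. That is exactly the situation already allowed by part (1), whose conclusion is strictly weaker, so your argument for (2) proves nothing beyond (1). (Also, the ``multiple'' alternative is not there to absorb a generic $f_b$ that is non-birational onto its image; it refers to the limit cycle acquiring a component of multiplicity at least $2$.) The classical proof of (2) (Proposition 3.2 of \cite{debarre}) requires a further step: assume every limit cycle is integral, so that in each fibre exactly one component is not $F$-contracted, of multiplicity one; then contract $F$-contracted $(-1)$-curves in fibres repeatedly, noting that $F$ descends across each such contraction, until one reaches a geometrically ruled surface $S'\to\bar B$. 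The two sections descend to distinct sections $\sigma_0',\sigma_\infty'$ still contracted by the descended map, so the same Hodge-index argument on $S'$ gives $\sigma_0'^2<0$ and $\sigma_\infty'^2<0$, contradicting the identity $\sigma_0'^2+\sigma_\infty'^2=2\,\sigma_0'\cdot\sigma_\infty'\geq 0$ valid for any two sections of a $\P^1$-bundle over a curve. Without some version of this descent-plus-two-sections contradiction, the dichotomy ``reducible or multiple'' does not follow from your construction.
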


The key idea for us is that an extremal $K_{\bar X}$-negative rational curve $f:\P^1\into \bar X$ must intersect the boundary $D$ in at least $3$ points since $X$ is uniformized by a bounded domain.  On the other hand, $f:\P^1\into \bar X$ deforms, since for any component $B$ of $\Hom(\P^1,\bar X)$ containing $f$,
\begin{equation}\dim B\geq -K_{\bar X}.f(\P^1)+\dim \bar X\label{dimension}\end{equation}
As long as $n\geq 3$, then $\dim B\geq 4$, and in the Baily--Borel compactification $X^*$ we have a family of rational curves with 3 fixed points, so by bend-and-break $f(\P^1)$ is algebraically equivalent to a reducible or multiple rational curve.  By induction we have a contradiction.  Thus, $\bar X$ can only be non-minimal if $n=2$:
\begin{theorem}[Theorem 1.1 of \cite{cerbocanon}]\label{minimal}$K_{\bar X}$ is nef if $n\geq 3$.

\end{theorem}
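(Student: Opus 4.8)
The plan is to reprove Theorem~\ref{minimal} by contradiction, combining Mori's cone theorem with the two forms of bend-and-break (Proposition~\ref{bendy}); the skeleton is already sketched above, so I will organize the steps and indicate where care is needed. Suppose $K_{\bar X}$ is not nef. The cone theorem then produces a $K_{\bar X}$-negative rational curve whose class spans an extremal ray $R$; among all $K_{\bar X}$-negative rational curves with class on $R$, choose one, $f\colon\P^1\into\bar X$, for which the positive integer $d:=-K_{\bar X}.f(\P^1)$ is minimal (an integer since $\bar X$ is smooth). Two preliminary remarks: first, $f(\P^1)\not\subset D$, because each component of $D$ is an \'etale quotient of an abelian variety and hence carries no rational curves; second, $f^{-1}(D)$ consists of at least three points, since $X=\B^n/\Gamma$ is a quotient of a bounded domain and therefore Kobayashi (so Brody) hyperbolic, and if $f^{-1}(D)$ had at most two points then $f$ restricted to its complement would give a nonconstant map $\C\into X$ (precomposing with $\exp$ in the $\C^*$ case). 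In particular $\deg f^*D\geq 3$.

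Next I would deform $f$ and pass to the Baily--Borel compactification. For any component $B\subset\Hom(\P^1,\bar X)$ through $f$, estimate~\eqref{dimension} gives $\dim B\geq d+n\geq n+1\geq 4$. Pick two of the points of $f^{-1}(D)$, call them $0,\infty\in\P^1$. Requiring $f_b(0)\in D$ and $f_b(\infty)\in D$ is each a codimension-$\leq 1$ condition (pullback of the Cartier divisor $D$ under the evaluation map), so the locus $B'\subset B$ where both hold satisfies $\dim B'\geq n-1\geq 2$. This is the essential use of the contraction $c\colon\bar X\into X^*$: since $c$ collapses $D$ to a finite set of cusps, the map $B'\into X^*\times X^*$, $f_b\mapsto(cf_b(0),cf_b(\infty))$, has finite image, so some fibre $B''$ has $\dim B''\geq 2$. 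For $f_b\in B''$ the composite $g_b:=c\circ f_b$ is a nonconstant map $\P^1\into X^*$ (nonconstant as $f(\P^1)\not\subset D$) with $g_b(0)=p_1$, $g_b(\infty)=p_2$ for two fixed cusps, and distinct $f_b$ give distinct $g_b$ because $c$ is an isomorphism over the dense open $X$. As $\dim B''\geq 2$ exceeds the dimension of an orbit of the torus $\G_m\subset\PGL_2$ fixing $0$ and $\infty$, a general curve $T\subset B''$ maps to a one-parameter family in $\Hom(\P^1,X^*;g|_{\{0,\infty\}})$ along which $g_b(\P^1)$ is nonconstant. Bend-and-break, Proposition~\ref{bendy}(2), then exhibits a limit of $g_b(\P^1)$ that is a reducible or non-reduced rational $1$-cycle on $X^*$.

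Finally I would descend the degeneration to $\bar X$ and run the numerical bookkeeping. Viewing $\{f_b(\P^1)\}_{b\in T}$ as a family of $1$-cycles on $\bar X$, properness of the Chow variety yields a limit cycle whose $c$-pushforward is the degenerate cycle just found; since a specialization of rational curves is supported on a connected union of rational curves and $c$ is birational, the limit on $\bar X$ is again a reducible or non-reduced cycle $Z=\sum a_iC_i$ with $\sum a_i\geq 2$, each $C_i$ a rational curve not contained in $D$, and $[Z]=[f(\P^1)]$. Since this class spans the extremal ray $R$, each $[C_i]=\lambda_i[f(\P^1)]$ with $\lambda_i>0$ (no curve is numerically trivial), and $\sum a_i\lambda_i=1$ with $\sum a_i\geq 2$ forces $\lambda_{i_0}<1$ for some $i_0$. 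Then $C_{i_0}$ is a $K_{\bar X}$-negative rational curve with class on $R$ and $0<-K_{\bar X}.C_{i_0}=\lambda_{i_0}d<d$, which is impossible when $d=1$ (no positive integer is $<1$) and contradicts the minimality of $d$ when $d\geq 2$. Hence no such $f$ exists, so $K_{\bar X}$ is nef for $n\geq 3$.

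I expect the main obstacle to be the middle step: one must deform $f$ while keeping enough of its boundary intersections under control, and the mechanism that makes this work is exactly the interplay between Brody hyperbolicity --- which forces at least three intersection points with $D$ and so leaves a positive-dimensional family after the two point conditions --- and the Baily--Borel contraction, which converts ``a marked point maps into $D$'' into ``a marked point maps to a fixed cusp'' so that bend-and-break becomes applicable. Secondary care is needed in forming the Chow limit on $\bar X$, in checking that its components are genuinely rational, and in verifying they are not absorbed into $D$ (which cannot happen precisely because $D$ contains no rational curves).
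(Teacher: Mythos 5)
Your proof is correct and is essentially the same bend-and-break argument the paper outlines and attributes to Di Cerbo--Di Cerbo: hyperbolicity of $X$ forces an extremal $K_{\bar X}$-negative rational curve to meet $D$ in at least three points, the dimension estimate \eqref{dimension} gives a large enough deformation family, and pushing to the Baily--Borel contraction converts boundary incidences into fixed marked points so that Proposition \ref{bendy}(2) applies and produces a breaking that contradicts extremality. The differences from the paper's sketch are cosmetic: you fix two marked points and quotient by the residual $\G_m$ where the paper fixes three, and you organize the final contradiction by minimality of $-K_{\bar X}.f(\P^1)$ rather than by induction.
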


\begin{remark}  Given Lemma \ref{minimal}, to prove Proposition \ref{general} it would be enough to show $K_{\bar X}^n>0$.  $L=K_{\bar X}+D$ induces the contraction to the Baily--Borel compactification, and 
\[K_{\bar X}^n=L^n+(-D)^n\]
For any component $E$ of the boundary, $-(-E)^n$ computes the rate of growth of the volume of a horoball neighborhood of $E$, and $L^n$ computes the global volume of $X$, up to a normalization.  The best known bounds on the size of distinct horoball neighborhoods give bigness for $n\geq 6$ (but only in the case of neat quotients); one could conceivably finish the proof of Proposition \ref{general} by a case by case analysis as in Parker \cite{parker}.  We instead pursue the algebraic line of attack.
\end{remark}

We now need to understand curves $C$ for which $K_{\bar X}.C=0$; we call such curves $K_{\bar X}$-trivial.  We call an (irreducible) curve $C$ rigid if no component of $\Hom(\tilde C,\bar X)$ containing the normalization $\tilde C\into \bar X$ has dimension greater than the dimension of the infinitesimal automorphism group $\dim H^0(\tilde C, T_{\tilde C})$ (that is, $3,1,0$ for $g(\tilde C)=0,1,\geq 2$ respectively).
\begin{lemma}\label{trivial}  For $n=3$, any $K_{\bar X}$-trivial rational curve is algebraically equivalent to a sum of rational curves $\sum_i C_i$ with each $C_i$ rigid.  For $n\geq 4$, there are no $K_{\bar X}$-trivial rational curves.  
\end{lemma}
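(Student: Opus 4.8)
The plan is to run the same bend-and-break machinery used in the proof of Theorem \ref{minimal}, but now tracking the dimension count more carefully in the $K_{\bar X}$-trivial case. Suppose $f:\P^1\into\bar X$ is a $K_{\bar X}$-trivial rational curve. As in the discussion preceding Theorem \ref{minimal}, the key geometric input is that $f(\P^1)$ must meet the boundary $D$ in at least $3$ points (with multiplicity): a rational curve landing in the open part $X$ would lift to $\B$, which is impossible since $\B$ is Kobayashi hyperbolic, and similarly two or fewer boundary points would not suffice because $X^* \smallsetminus \{\text{cusps}\}$ is hyperbolic and a rational curve with at most two punctures is $\C$ or $\C^*$, again contradicting hyperbolicity after composing with the contraction $\bar X\into X^*$. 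So in the Baily--Borel compactification $X^*$ the image $\bar f(\P^1)$ is a rational curve passing through $3$ fixed points.

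Next I would estimate the dimension of a component $B\subset\Hom(\P^1,\bar X)$ containing $f$. By the standard deformation bound \eqref{dimension},
\[
\dim B\geq -K_{\bar X}.f(\P^1)+\dim\bar X=0+n=n,
\]
using $K_{\bar X}$-triviality. The subscheme of $B$ fixing the $3$ marked points (mapping to $3$ chosen boundary points of $\bar f(\P^1)$ in $X^*$) has dimension at least $\dim B - 3\geq n-3$, after accounting for the $3$-dimensional automorphism group of $\P^1$ acting on the marked points — more precisely, the quotient family of curves through $3$ fixed points in $X^*$ has dimension at least $\dim B - 3 - 3 = n - 6$ naively, but since we only need a positive-dimensional family of curves through the $3$ points to apply bend-and-break \ref{bendy}(2), the relevant threshold is $\dim B > 3$, i.e. $n\geq 4$. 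For $n\geq 4$ we then get a curve $B'\subset\Hom(\P^1,X^*;\bar f|_{\{0,1,\infty\}})$ along which the image is nonconstant, so by Proposition \ref{bendy}(2) the limit is a reducible or multiple rational curve; pulling back to $\bar X$ and inducting on $-K_{\bar X}$-degree (or on the number of components) forces a contradiction exactly as in the proof of Theorem \ref{minimal}, since a reducible/multiple degeneration must contain a $K_{\bar X}$-trivial component of strictly smaller complexity, and the base case is a single irreducible rational curve which we have just shown cannot deform nontrivially through $3$ fixed boundary points. Hence there are no $K_{\bar X}$-trivial rational curves when $n\geq 4$.

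For $n=3$ the dimension count is borderline: $\dim B\geq 3$, so we may not be able to fix all $3$ boundary points and still have a moving family. Here I would instead argue that $f$ itself, if not rigid, deforms in a family of dimension $>3$; if the family keeps the $3$ boundary intersection points fixed we conclude as above, and if not, we can still use the deformation to break $f(\P^1)$ via Proposition \ref{bendy}(1) after fixing one general point, obtaining a degeneration into rational curves of smaller $(-K_{\bar X})$-degree — but every component still has $K_{\bar X}$-degree $\geq 0$ by Theorem \ref{minimal}, so in fact each component is again $K_{\bar X}$-trivial. Iterating, we write $f(\P^1)$ as algebraically equivalent to $\sum_i C_i$ with each $C_i$ an irreducible $K_{\bar X}$-trivial rational curve that admits no nontrivial deformation breaking it further, i.e. each $C_i$ is rigid in the sense defined. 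The main obstacle is precisely making this $n=3$ induction terminate cleanly: one must ensure that the bend-and-break degeneration strictly decreases a well-chosen complexity invariant (such as $-K_{\bar X}.C$ together with the number of components, or the anticanonical degree against an auxiliary ample divisor like $D$), and that the rigidity notion is stable under the process, so that the procedure halts with rigid pieces rather than cycling; I expect this bookkeeping, rather than any new geometric idea, to be where the real work lies.
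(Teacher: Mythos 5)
Your proposal follows essentially the same strategy as the paper's proof: the deformation bound \eqref{dimension} gives $\dim B \geq n$ for a $K_{\bar X}$-trivial rational curve, the three-boundary-points constraint feeds into bend-and-break after contracting to $X^*$, and nefness of $K_{\bar X}$ propagates triviality to the pieces of the degeneration. The $n\geq 4$ half is correct.

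For $n=3$, the fallback branch is a genuine misstep. You propose to use Proposition \ref{bendy}(1) when the family does not fix the three boundary points, but for a map from $\P^1$ part (1) says nothing useful: the conclusion that a limit ``has a rational component'' is vacuous when the source curve is already rational, and (1) does not force a reducible or multiple degeneration. The branch is also unnecessary. Landing in the boundary divisor $D$ is a codimension-$1$ condition per marked point (not codimension $n$, since one only asks that the image hit a divisor), so whenever $\dim B \geq 4$ the subscheme of maps $f'$ with $f'(0),f'(1),f'(\infty)\in D$ has dimension $\geq \dim B - 3 \geq 1$; pushing forward along $\bar X\to X^*$ gives a nonconstant family through three fixed cusps. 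Note that bend-and-break (2) is stated for a family of maps, so there is no additional $\PGL_2$ quotient to take, which also resolves your $n-3$ versus $n-6$ hesitation. Thus the dichotomy is exactly the paper's: either $\dim B \geq 4$ and the curve breaks by \ref{bendy}(2), or $\dim B = 3$ and the curve is rigid by definition. The termination you flag as the main obstacle is settled by degree against any ample divisor $H$, or against $L=K_{\bar X}+D$ after observing that no component of the limit can lie in $D$ (such a component would be $K_{\bar X}$-trivial yet $D|_D$ is anti-ample); each component of a reducible or multiple limit has strictly smaller degree, so the recursion halts with rigid pieces.
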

\begin{proof}  If $f:\P^1\into \bar X$ has $K_{\bar X}.f(\P^1)=0$, then for any component $B$ of $\Hom(\P^1,\bar X)$ containing $f$ for which $\dim B\geq 4$, $f(\P^1)$ deforms to a reducible or multiple rational curve $\sum_i C_i$ by bend-and-break.  Since $K_{\bar X}$ is nef, we have $K_{\bar X}.C_i=0$ for each $i$.  By induction we can assume the $C_i$ are rigid.  By \eqref{dimension}, if $n\geq 4$ there are no rigid $K_{\bar X}$-trivial rational curves.
\end{proof}
The following corollary is not needed, but interesting nonetheless.
\begin{cor}  For $n\geq 4$, any $K_{\bar X}$-trivial curve is rigid.
\end{cor}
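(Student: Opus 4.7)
The plan is to reduce the corollary to Lemma \ref{trivial} via bend-and-break: if a $K_{\bar X}$-trivial curve were not rigid, its deformations would produce a $K_{\bar X}$-trivial rational curve, contradicting the $n\geq 4$ half of Lemma \ref{trivial}.

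I would handle the rational case first: if $C$ is rational, then by Lemma \ref{trivial} no such $K_{\bar X}$-trivial curve exists at all for $n\geq 4$, so rigidity holds vacuously. From now on, assume the normalization $\tilde C$ has genus $g\geq 1$ and suppose for contradiction that some component $B\subset \Hom(\tilde C,\bar X)$ containing the normalization map $f$ satisfies $\dim B > h^0(\tilde C,T_{\tilde C}) = \dim \operatorname{Aut}(\tilde C)$.

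The main technical step is to extract a complete $1$-parameter subfamily $B'\subset \Hom(\tilde C,\bar X;f|_{p_0})$ through $f$, for some $p_0\in \tilde C$, along which $f_b(\tilde C)$ is non-constant. Since $\operatorname{Aut}(\tilde C)$ acts on $B$ by reparametrization with orbit dimension $h^0(T_{\tilde C})$, the images $f_b(\tilde C)$ genuinely move off a proper subvariety of $B$. For $g\geq 1$, the stabilizer $\operatorname{Aut}(\tilde C,p_0)$ is finite (since $\deg T_{\tilde C}(-p_0)=1-2g<0$ implies $H^0(T_{\tilde C}(-p_0))=0$), so image variation descends to positive-dimensional fibers of the evaluation $\operatorname{ev}_{p_0}\colon B\to \bar X$; a standard dimension count then yields the desired $B'$.

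With $B'$ in hand, Proposition \ref{bendy}(1) implies that $f_b(\tilde C)$ degenerates along $B'$ to an algebraic $1$-cycle containing a rational component $R$. The limit is algebraically equivalent to $f(\tilde C)$ and hence has $K_{\bar X}$-degree zero; since $K_{\bar X}$ is nef by Theorem \ref{minimal}, every component of the limit, and in particular $R$, is $K_{\bar X}$-trivial. This contradicts Lemma \ref{trivial}. The only delicate point is the construction of $B'$; everything downstream is a formal combination of the nefness of $K_{\bar X}$ with the non-existence of $K_{\bar X}$-trivial rational curves for $n\geq 4$.
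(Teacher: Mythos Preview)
Your overall strategy of reducing to Lemma~\ref{trivial} via bend-and-break is the same as the paper's, but the step where you manufacture the fixed point needed for Proposition~\ref{bendy}(1) does not go through. You want the fibres of $\operatorname{ev}_{p_0}\colon B\to\bar X$ to be positive-dimensional, yet non-rigidity only guarantees $\dim B>h^0(T_{\tilde C})\in\{0,1\}$ for $g\geq 1$, while the target has dimension $n\geq 4$; there is no reason $\operatorname{ev}_{p_0}$ cannot be generically finite. Finiteness of $\operatorname{Aut}(\tilde C,p_0)$ only tells you that \emph{within} a positive-dimensional fibre the images would move---it does not produce such a fibre. As a cautionary model (ignoring the ball-quotient hypothesis, which your dimension count does not invoke), the fibres $\{\mathrm{pt}\}\times E$ of $\P^1\times E$ are non-rigid $K$-trivial genus-one curves with $\dim B=2$, but no two of them share a point, so no $B'\subset\Hom(E,\P^1\times E;f|_{p_0})$ with moving image exists.

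The paper obtains the fixed point from the boundary instead. Since $K_{\bar X}|_D\equiv -D|_D$ is ample, a $K_{\bar X}$-trivial curve $C$ cannot lie in $D$; then $(K_{\bar X}+D).C>0$ forces $D.C>0$, so $C$ meets $D$. Composing the family with the contraction $\bar X\to X^*$ to the Baily--Borel compactification, the image of every deformation passes through one of the finitely many cusps, and cusps are isolated points of $X^*$; this supplies the fixed point for bend-and-break in $X^*$. Your argument is missing exactly this geometric input.
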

\begin{proof}  If a $K_{\bar X}$-trivial curve $C$ is not rigid, then there is a positive dimensional component of $\Hom(\tilde C,\bar X)$ containing $\tilde C\into \bar X$.  Note that $C$ is not contained in the boundary since $K_{\bar X}|_D$ is ample, and further $D.C>0$.  Therefore $\tilde C\into X^*$ deforms with a fixed point, so by bend-and-break it deforms to a curve with a rational component which must be $K_{\bar X}$-trivial, contradicting the lemma.
\end{proof}


If we assume the abundance conjecture, then Lemma \ref{trivial} is enough to conclude Proposition \ref{general}.  Indeed, by Lemma \ref{minimal}, $K_{\bar X}$ would then be semi-ample, so let $f:\bar X\into Z$ be the fiber space induced by $|mK_{\bar X}|$ for $m\gg 0$.  For any fiber $F$ and any curve $C\subset F$, $K_{\bar X}.C=0$ whereas $D|_D\equiv -K_{\bar X}|_D$ is anti-ample.  We must therefore have $\dim(D\cap F)=0$, so $\dim F=n-\kappa(\bar X)\leq 1$.  But if $\kappa(\bar X)=n-1$, then the general fiber $F$ is a $K_{\bar X}$-trivial elliptic curve.  Again because $X$ is uniformized by a bounded domain, $E.F\geq 1$ for some component $E$ of the boundary.  Taking a curve in $C\subset E$ such that the fibers of $\mathcal{E}=f^{-1}(f(C))$ have fixed $j$-invariant, $C$ is a multisection of $\mathcal{E}/f(C)$, so base-changing to $C$ we have an isotrivial family $\mathcal{E}/C$ with a section.  Projecting to $X^*$, this is a family of maps $F\into X^*$ fixing $0\in F$, and by bend-and-break there is a $K_{\bar X}$-trivial rational curve.  For $n\geq 4$, this contradicts Lemma \ref{trivial}.  

For $n=3$ the abundance conjecture is known \cite{abundancethree}.  In this case there are finitely many rigid $K_{\bar X}$-trivial rational curves $R$ with $D.R\leq D.F$, and these curves are contracted via $f$ to finitely many points of $Z$.  If we choose $C$ to miss these points, then we again have a contradiction.

In higher dimensions, the best that is currently known is a rational version of the abundance conjecture.  Recall that for $M$ a nef line bundle on a normal projective variety $ Y$, there is a nef reduction map $f:Y\dashto Z$ to a normal variety $Z$ \cite{reduce}.  This map is the unique (up to birational equivalence on $Z$) dominant rational map with connected fibers such that
\begin{enumerate}
\item $f$ is ``almost holomorphic" in the sense that if $U\subset Y$ is the maximal open set on which $f$ is defined, $f:U\into Z$ has a proper fiber (and therefore the general fiber is proper);
\item $M$ is numerically trivial on all proper fibers of dimension $\dim Y-\dim Z$;
\item For a general point $y\in  Y$ and any irreducible curve $C$ through $x$ with $\dim f(C)=1$ we have $L.C>0$.
\end{enumerate}
We then call $n(M):=\dim Z$ the \emph{nef dimension} of $M$ and $n(Y):=n(K_Y)$ the \emph{nef dimension} of $Y$ if $K_Y$ is nef. 

\begin{lemma}\label{nefdim}$\bar X$ has maximal nef dimension if $n\geq 3$.
\end{lemma}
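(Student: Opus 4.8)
The plan is to argue by contradiction, essentially mirroring the structure used above to deduce Proposition \ref{general} from the abundance conjecture, but replacing the semi-ample fibration $f\colon\bar X\into Z$ by the nef reduction map $f\colon\bar X\dashto Z$ associated to $K_{\bar X}$ (which is nef by Lemma \ref{minimal}). Suppose $n(K_{\bar X})=\dim Z<n$. Property (2) of the nef reduction says $K_{\bar X}$ is numerically trivial on the general fiber $F$, which has dimension $n-\dim Z\geq 1$. First I would show that $D$ must meet $F$: indeed, $X=\B/\Gamma$ is uniformized by a bounded domain, so it contains no positive-dimensional compact analytic subvariety; since $F$ is proper (by property (1), $f$ is almost holomorphic and the general fiber is proper) and positive-dimensional, $F\cap D\neq\emptyset$, and in fact $E.F>0$ for some boundary component $E$. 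On the other hand, $D|_D\equiv -K_{\bar X}|_D$ is anti-ample on $D$ (the normal bundle $\O_E(E)$ is anti-ample on each component), so $K_{\bar X}$ is ample on $D$; hence $K_{\bar X}$ restricted to any curve inside $F\cap D$ would be positive, contradicting numerical triviality of $K_{\bar X}$ on $F$ unless $\dim(F\cap D)=0$. Combining, $\dim F\leq 1$, i.e. $\dim Z\geq n-1$, so the only case to rule out is $\dim Z=n-1$, where the general fiber $F$ is a $K_{\bar X}$-trivial elliptic curve meeting $D$.

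In the case $\dim Z=n-1$, I would run the same bend-and-break argument as in the abundance-conjecture deduction of Proposition \ref{general}. Pick a component $E$ of $D$ with $E.F>0$ and an irreducible curve $C\subset E$ over which the fibers of $\mathcal{E}:=f^{-1}(f(C))$ have constant $j$-invariant (possible since $j$ is constant on fibers of a map to the coarse moduli of elliptic curves, after restricting to a suitable curve downstairs). Then $C$ is a multisection of $\mathcal{E}\to f(C)$, so after base change to $C$ we get an isotrivial elliptic fibration $\mathcal{E}_C\to C$ with a section; projecting to the Baily--Borel compactification $X^*$ (where $E$ is contracted), this produces a positive-dimensional family of maps $F\into X^*$ all passing through the point $0\in F$ mapping to the cusp. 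Here I must be slightly careful that $f$ is only almost holomorphic, so I would choose $C$ (and the point $f(C)$) inside the locus $U$ where $f$ is a morphism and where $F$ is proper, which is possible since the indeterminacy locus has codimension $\geq 2$ in $Z$ and $E$ is a divisor dominating $Z$ under $f|_E$ — this last point, that $f|_E$ is dominant, follows because $K_{\bar X}|_E$ is ample so no curve in $E$ can lie in a fiber of $f$, forcing $\dim f(E)=n-1=\dim Z$. By bend-and-break (Proposition \ref{bendy}(1)) the limit of this family contains a rational component, which is $K_{\bar X}$-trivial since $K_{\bar X}$ is nef. For $n\geq 4$ this contradicts Lemma \ref{trivial} directly. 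For $n=3$, Lemma \ref{trivial} says the $K_{\bar X}$-trivial rational curve is algebraically equivalent to a sum of rigid ones; there are only finitely many rigid $K_{\bar X}$-trivial rational curves $R$ with $D.R$ bounded by $D.F$, hence finitely many relevant points of $Z$, and by choosing $f(C)$ to avoid their images we again reach a contradiction.

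The main obstacle I anticipate is handling the almost-holomorphicity of the nef reduction map rigorously: unlike the semi-ample case, $f$ is only a rational map, so one has to ensure that the curve $C\subset E$, the induced family of elliptic curves, and the bend-and-break degeneration all take place over the locus where $f$ is an honest morphism with proper fibers, and that the resulting limit rational curve is genuinely $K_{\bar X}$-trivial rather than merely lying in the indeterminacy locus. The key enabling observations are that $K_{\bar X}|_D$ is ample (so $f|_E$ is finite-to-one on curves and dominant onto $Z$), that the indeterminacy locus of $f$ has codimension $\geq 2$, and that bend-and-break as stated in Proposition \ref{bendy} only needs a family of maps from a fixed pointed curve into the \emph{projective} variety $X^*$ — all the delicate positivity is pushed onto $\bar X$ via $K_{\bar X}$ being nef. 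Everything else is a routine repetition of the arguments already given for Lemma \ref{trivial} and the abundance-based proof of Proposition \ref{general}, with ``semi-ample fibration'' replaced by ``nef reduction'' throughout.
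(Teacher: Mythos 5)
Your approach matches the paper's for $n\geq 4$: run the nef reduction of $K_{\bar X}$, show $\dim F\leq 1$ by pairing $K_{\bar X}$-triviality on $F$ against ampleness of $-D|_D$, and when $\dim F=1$ produce a $K_{\bar X}$-trivial rational curve via the isotrivial-elliptic-family-with-a-section bend-and-break construction, contradicting Lemma \ref{trivial}. Your attention to the almost-holomorphicity of the nef reduction (choosing $C$ inside the locus where $f$ is a morphism, noting $f|_E$ is generically finite and hence dominant) makes explicit a point the paper treats implicitly, and is a welcome addition.

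However, your treatment of $n=3$ has a genuine gap, and it is precisely the point the paper circumvents. You propose to handle $n=3$ via the nef reduction as well and conclude that the finitely many rigid $K_{\bar X}$-trivial rational curves $R$ with $D.R\leq D.F$ give ``finitely many relevant points of $Z$.'' But the nef reduction map only guarantees that $K_{\bar X}$ is numerically trivial on \emph{general} proper fibers and that curves through a \emph{general} point with positive-dimensional image have positive $K_{\bar X}$-degree (properties (2) and (3) above). The finitely many rigid curves do not pass through a general point of $\bar X$, so there is no reason a priori that the nef reduction contracts them to points: $f(R)$ could well be a curve in $Z$, in which case ``choosing $f(C)$ to avoid their images'' is a stronger condition than you have justified and the final contradiction does not close. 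The paper avoids this entirely by invoking the abundance theorem for threefolds \cite{abundancethree}: for $n=3$ one has an honest semi-ample fibration $f\colon\bar X\into Z$ induced by $|mK_{\bar X}|$, which contracts \emph{every} $K_{\bar X}$-trivial curve, so the rigid curves genuinely map to finitely many points and the argument runs. For $n\geq 4$ this issue never arises because Lemma \ref{trivial} says there are no $K_{\bar X}$-trivial rational curves at all, so the mere existence of the bend-and-break limit is already a contradiction. To repair your proof, replace the nef reduction by the abundance fibration in the $n=3$ case, exactly as the paper does; the rest of your argument then goes through.
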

\begin{proof}
For $n=3$, we are done by the argument above.  For $n\geq 4$, take $M=K_{\bar X}$ on $Y=\bar X$ and let $F$ be a general fiber of the nef reduction.  $K_F=K_{\bar X}|_F$ is numerically trivial, so $D|_F$ is ample, and therefore it must again be the case that $\dim (F\cap D)=0$.   This can only happen if $\dim F\leq1$.  If $\dim F=0$, we're done, while if $\dim F=1$, $F$ is an elliptic curve, and the argument above provides the contradiction.
%
%

\end{proof}

We would like to show that Lemma \ref{nefdim} implies that $K_{\bar X}$ is big, but in general for a nef line bundle $M$ it is only the case that
\[n(M)\geq \nu(M)\geq \kappa(M)\]
where $\nu(M)$ is the numerical dimension and $\kappa(M)$ is the Iitaka dimension of $M$.  If $M=K_Y$ is the canonical bundle of a smooth projective variety $Y$, then the abundance conjecture implies all three are equal, but we can already see that maximal nef dimension implies bigness assuming $\kappa(X)$ is sufficiently large:

\begin{lemma}\label{nefthenbig}Let $Y$ be a smooth $n$-dimensional projective variety with $K_Y$ nef.  If $n(Y)=n$ and $\kappa(Y)\geq n-2$, then in fact $\kappa(Y)=n$.
\end{lemma}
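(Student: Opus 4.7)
My plan is to argue the contrapositive: assuming $\kappa(Y)\in\{n-2,n-1\}$, I will produce a covering family of $K_Y$-trivial curves on $Y$, violating $n(Y)=n$. Pass to a smooth birational model $\pi\colon \tilde Y\to Y$ on which the Iitaka fibration $\phi\colon \tilde Y\to W$ of $K_{\tilde Y}$ is a morphism, and write $K_{\tilde Y}=\pi^*K_Y+R$ with $R\geq 0$ exceptional. The general fiber $F$ of $\phi$ is smooth projective of dimension $k=n-\kappa(Y)\in\{1,2\}$; by adjunction (using $F|_F=0$ for a fiber of a morphism) $K_F=K_{\tilde Y}|_F$, and by the defining property of the Iitaka fibration $\kappa(F,K_F)=0$.

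In the case $k=1$, $F$ is an elliptic curve with $K_F\equiv 0$, so $\pi^*K_Y\cdot F = K_{\tilde Y}\cdot F - R\cdot F = -R\cdot F\leq 0$; combined with the nefness of $\pi^*K_Y$ this forces equality. Since a general fiber is not contained in the divisor $R$, the images $\pi(F)$ form a covering family of $K_Y$-trivial curves in $Y$, giving the desired contradiction.

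In the case $k=2$, $F$ is a smooth projective surface with $\kappa(F)=0$, so by the Enriques--Kodaira classification it admits a contraction $\sigma\colon F\to F_{\min}$ onto a K3, Enriques, abelian, or bielliptic surface with $K_{F_{\min}}\equiv 0$. Set $L:=\pi^*K_Y|_F=K_F-R|_F$; then $\sigma_*L\equiv K_{F_{\min}}-\sigma_*(R|_F)\equiv -\sigma_*(R|_F)$, and $\sigma_*L$ is nef, since pushforward of a nef class under a birational morphism of smooth surfaces is nef (by the projection formula applied to $\sigma^*C$ for curves $C\subset F_{\min}$). A nef class equal to minus an effective one must vanish against any ample class, so $\sigma_*(R|_F)=0$ and $\sigma_*L\equiv 0$. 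Hence $L$ is numerically supported on the $\sigma$-exceptional locus, whose intersection form is negative definite; combined with Kleiman's inequality $L^2\geq 0$ for nef $L$ on a surface, this forces $L\equiv 0$. Any curve $C\subset F$ passing through a general point of $F$ and not contained in $R|_F$ then satisfies $K_Y\cdot \pi(C)=L\cdot C=0$ by the projection formula, yielding the desired covering family and contradicting $n(Y)=n$.

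The main obstacle is the $k=2$ case, since $F$ is not in general its own minimal model and the birational correction $R|_F$ enters nontrivially: one cannot simply quote the triviality $K_{F_{\min}}\equiv 0$. The crux is the negative-definiteness of the intersection form on the $\sigma$-exceptional locus, which upgrades the easy conclusion $\sigma_*L\equiv 0$ on $F_{\min}$ to the stronger conclusion $L\equiv 0$ on all of $F$, and this is what allows curves through a general point of $Y$ to be produced.
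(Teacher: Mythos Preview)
Your proof is correct and follows the same overall strategy as the paper: pass to a resolution on which the Iitaka fibration is a morphism, and rule out fiber dimensions $1$ and $2$ using that $\kappa(F)=0$ together with the classification of surfaces.  The dimension-$1$ case is handled identically in spirit.

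The surface case is where the two arguments diverge tactically.  The paper produces a single contradicting curve: since $K_F$ is numerically a sum of $\sigma$-exceptional curves, one takes $C=\sigma^*C'$ for $C'$ a curve through a general point of $F_{\min}$, so that $K_F\cdot C=K_{F_{\min}}\cdot C'=0$ and $E\cdot C\geq 0$, while $g^*K_Y\cdot C>0$ by the hypothesis $n(Y)=n$; this contradicts $g^*K_Y\cdot C + E\cdot C = K_F\cdot C = 0$.  You instead prove the stronger statement $L:=\pi^*K_Y|_F\equiv 0$ on all of $F$, by first pushing forward to $F_{\min}$ to get $\sigma_*L\equiv 0$, and then using negative definiteness of the exceptional lattice together with $L^2\geq 0$.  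Your route is a bit longer but more explicit and yields a cleaner intermediate conclusion; the paper's is shorter but leaves the choice of $C$ implicit.  Either way the contradiction with $n(Y)=n$ is the same.
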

\begin{proof}
Let $f:Y'\into Z$ be the Iitaka fibration of $K_{Y}$, which admits a birational map $g:Y'\into Y$, and let $F$ be a very general fiber of $f$.  We know that $\dim F=n-\kappa(Y)\leq 2$, that $g^*K_{Y}|_F$ has Iitaka dimension 0, and that $\kappa(F)=0$ (\emph{e.g.} \cite[\S 2.1.C]{Lazarsfeld}).  We also know $g^* K_{ Y}|_F$ is nef and nonzero on every curve through a very general point of $F$ by the assumptions, which immediately implies $\dim F\neq 1$.  If $\dim F=2$, then for some effective divisor $E$, $g^*K_{ Y}|_F+E= K_F$, but $K_F$ is numerically equivalent to a sum of $-1$ curves since $\kappa(F)=0$, by the Enriques-Kodaira classification of surfaces.  Thus there is a curve $C$ in $F$ with $K_F.C=0$ and $C.E\geq 0$ while $g^*K_{Y}.C>0$, which is a contradiction.
\end{proof}

Given Lemmas \ref{nefdim} and \ref{nefthenbig}, the proof of Proposition \ref{general} will be completed by the following
\begin{lemma}\label{kodairabigenough}$\bar X$ has Kodaira dimension $\kappa(\bar X)\geq n-2$ for $n\geq 3$.
\end{lemma}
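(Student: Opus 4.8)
The plan is to produce enough pluricanonical sections on $\bar X$ by exploiting the natural singular hermitian metric on $\omega_{\bar X}(D)$ coming from the Bergman metric, together with the relation \eqref{canonical} $\chone(K_{\bar X}+D)=\frac{1}{2\pi}\frac{n+1}{2}[\omega_X]$. The basic idea: $K_{\bar X}+D$ is big (its top self-intersection is a positive multiple of $\vol(X)>0$ by \eqref{canonical}, since $L=K_{\bar X}+D$ is nef and $L^n$ computes the global volume), so for $m\gg 0$ the linear system $|m(K_{\bar X}+D)|$ is large; the task is to trade in a copy of $D$ while only losing ``two dimensions worth" of sections. Concretely, I would first establish that $K_{\bar X}+(1-\lambda)D$ is big for $\lambda$ in some explicit interval containing $0$ — this follows already from $L$ big together with $-D|_D$ ample, by a standard perturbation argument: writing $K_{\bar X}+(1-\lambda)D = L - \lambda D$ and using that on the boundary $-D|_D\equiv K_{\bar X}|_D$ is ample, one can produce a big class by a small twist. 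That gives $\kappa(K_{\bar X}+(1-\lambda)D)=n$ for small rational $\lambda>0$.

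Next I would pass from $K_{\bar X}+(1-\lambda)D$ to $K_{\bar X}$ by controlling the vanishing along $D$. Each component $E$ of $D$ is an \'etale quotient of an abelian variety, so $K_E=K_{\bar X}|_E + E|_E \equiv 0$, hence $K_{\bar X}|_E \equiv -E|_E$ is ample with $\chone(K_{\bar X}|_E)$ a specific multiple of the restricted Kähler class. The adjunction/restriction exact sequences
\[
0\to \O_{\bar X}(mK_{\bar X}+(j-1)D)\to \O_{\bar X}(mK_{\bar X}+jD)\to \O_D(mK_{\bar X}+jD)\to 0
\]
let me peel off copies of $D$ one at a time, for $j$ running from $0$ up to some bound. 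The cokernel terms $H^0(D, (mK_{\bar X}+jD)|_D)$ have dimension growing like $\dim D = n-1$ in $m$, i.e. like $m^{n-1}$, whereas $h^0(mK_{\bar X}+jD)$ we are trying to show grows like $m^n$; summing the defects across the finitely many values of $j$ needed to reach $K_{\bar X}+(1-\lambda)D$ from $K_{\bar X}$ costs only $O(m^{n-1})$, which is negligible. The upshot is that $\kappa(\bar X)\geq \kappa(K_{\bar X}+(1-\lambda)D) - (\text{something bounded})$; but more care is needed because Iitaka dimension is not simply additive under such twists.

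The cleaner route, and the one I would actually carry out, is volume-based: show $\mathrm{vol}(K_{\bar X})>0$ directly, which by bigness of a nef-or-not class still forces $\kappa\geq$ something, but in fact I only need $\kappa(\bar X)\geq n-2$, not $\kappa=n$. So I would instead argue as follows. Take $m$ large and consider sections of $m(K_{\bar X}+(1-\lambda)D)$ with $\lambda = 1/m$, i.e. sections of $mK_{\bar X}+(m-1)D$; these give a map whose image has dimension $n$. The boundary $D$ is rigid (each component has anti-ample normal bundle, so cannot move), hence the base locus of $|mK_{\bar X}+(m-1)D|$ along $D$ is controlled: a general such section vanishes to bounded order along each $E$. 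Subtracting off the fixed part supported on $D$, one lands in $|mK_{\bar X}+rD|$ for $r$ bounded independently of $m$ (here is where rigidity and anti-ampleness of $\O_E(E)$ enter decisively), and then the finitely many restriction sequences above, with cokernels of size $O(m^{n-1})=o(m^n)$, show $h^0(mK_{\bar X})\geq h^0(mK_{\bar X}+rD) - O(m^{n-1})$ grows at least like a positive constant times $m^{n-2}$ — in fact like $m^n$, but $m^{n-2}$ is all the lemma claims.

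The main obstacle is the bookkeeping in the last paragraph: making precise ``the fixed part of $|m(K_{\bar X}+(1-\lambda)D)|$ supported on $D$ has bounded coefficients", which requires quantifying rigidity of $D$ via the anti-ampleness of the conormal bundle $\O_E(-E)$ and ensuring that after removing this fixed part the residual system still has maximal dimension. One must be careful that the bound $r$ on the residual multiple of $D$ does not itself depend on $m$; this is where one uses that $-D|_E$ is ample with a fixed positive class, so that $(mK_{\bar X}+(m-1)D)|_E = mK_{\bar X}|_E - D|_E + \cdots$ has sections only up to a bounded shift. Everything else — bigness of $K_{\bar X}+D$ from \eqref{canonical}, bigness of $K_{\bar X}+(1-\lambda)D$ for small $\lambda$, and the restriction sequence estimate — is routine.
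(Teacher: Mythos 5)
Your strategy has a genuine gap at the step you flagged yourself as ``bookkeeping''. The claim that the fixed part of $|mK_{\bar X}+(m-1)D|$ contains $(m-1-r)D$ for $r$ bounded independently of $m$ is false. If it were true, then $h^0(mK_{\bar X}+(m-1)D)=h^0(mK_{\bar X}+rD)$ for all large $m$, forcing $\mathrm{vol}(K_{\bar X}+D)=\mathrm{vol}(K_{\bar X})$, i.e. $L^n=K_{\bar X}^n$. But $L^n-K_{\bar X}^n=-(-D)^n=D.(-D)^{n-1}>0$ whenever $D\neq\emptyset$, since $-D|_E$ is ample on each $(n-1)$-dimensional boundary component $E$. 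So the fixed part along $D$ grows only sublinearly, and the rigidity of $D$ (anti-ample normal bundle) does not bound it in the way you want. Relatedly, in your second paragraph the chain of restriction sequences from $mK_{\bar X}+(m-1)D$ down to $mK_{\bar X}$ has $O(m)$ terms, not ``finitely many'', so the total loss of sections is $O(m\cdot m^{n-1})=O(m^n)$, not $O(m^{n-1})$; this is exactly enough to wipe out the bigness of $L$, which is why the lemma only claims $\kappa\geq n-2$ and not $\kappa=n$ at this stage.

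The paper's proof uses the same restriction sequences but does not try to bound a fixed part. Instead it first dispatches the easy case $K_{\bar X}^n>0$ (big and nef immediately), reduces to $K_{\bar X}^n=0$ (equivalently $L^n=-(-D)^n$), and then computes $\chi((m+1)K_{\bar X})$ \emph{exactly} as a polynomial in $m$: Kawamata--Viehweg vanishing converts $h^0$ to $\chi$, Hirzebruch proportionality gives $\chi(\bar X,-tL)$ in closed form, and the numerical triviality of $\Omega^1_D$ (boundary components are \'etale quotients of abelian varieties) gives $\chi(D,\omega^a_D(-\ell D))$ in closed form. Summing over $\ell=0,\dots,m$ produces a polynomial in $m$ whose $m^n$ and $m^{n-1}$ coefficients cancel exactly when $K_{\bar X}^n=0$, but whose $m^{n-2}$ coefficient is a positive multiple of $L^n$ for $n\geq 3$. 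That explicit cancellation is the content you would need to supply in place of the false fixed-part bound; without it, the argument does not close.
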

\begin{proof} Let $L=K_{\bar X}+D$.  $K_{\bar X}$ is nef by Lemma \ref{minimal}, so
\[K_{\bar X}^n=(L-D)^n=L^n+(-D)^n\geq 0\]
If we have strict inequality, then $K_{\bar X}$ is big as it is already nef by Lemma \ref{minimal}.  Thus, we need only treat the case $K_{\bar X}^n=0$, \emph{i.e.} $L^n+(-D)^n=0$.

For $a> b>0$, consider the sequence
\begin{equation}\label{sequence2}0\into \O_{\bar X}(aK_{\bar X}+(b-1)D)\into \O_{\bar X}(aK_{\bar X}+bD)\into \omega^a_D(-(a-b)D)\into 0\end{equation}
where we've used $\O_D(L)\cong \omega_D$.  By Kawamata--Viehweg vanishing neither the middle nor the right term has higher cohomology.  Indeed, for the right term $\omega_D(-D)$ is ample since $\omega_D$ is numerically trivial, while for the middle term 
\[aK_{\bar X}+bD\equiv K_{\bar X}+bL+(a-b-1)K_{\bar X}\]
and since $L$ is big and nef, the same is true of $(a-b-1)K_{\bar X}+bL$ by Lemma \ref{nef}.  

We can conclude two things from this.  First, setting $a=m+1$ and $b=1$, $(m+1)K_{\bar X}$ has no cohomology in degree 2 and higher for $m\geq 1$, and 
\begin{align*}
\dim H^0(\bar X,\O((m+1)K_{\bar X}))&\geq \chi((m+1)K_{\bar X})\\
&= \chi((m+1)K_{\bar X}+D)-\chi( \omega^{m+1}_D(-mD))
\end{align*}
Second, using \eqref{sequence2} for $a=m+1$ and $b=2,\ldots,m$, all three terms have no higher cohomology, and we have
\begin{align*}
\chi((m+1)K_{\bar X}) &=\chi(K_{\bar X}+mL)-\sum_{\ell=0}^m\chi(\omega^{m+1}_D(-\ell D))\\
&=(-1)^n\chi(-mL)-\sum_{\ell=0}^m\chi(\omega^{m+1}_D(-\ell D))
\end{align*}
By Hirzebruch proportionality \cite{mumfordhp}, as $\P^n$ is the compact dual of $\B^n$,
\begin{align*}
\chi(\bar X,-tL)&=\frac{(-L)^n}{(n+1)^n}\cdot\chi(\P^n,\omega_{\P^n}^{-t})+O(1)\\
&= \frac{(-L)^n}{(n+1)^n}\cdot\binom{(n+1)t+n}{n}+O(1)
\end{align*}
Since each component of the boundary is an \'etale quotient of an abelian variety, all of the chern classes of $\Omega_D^1$ vanish numerically, and
\[\chi(D,\omega^{m+1}_D(-\ell D))=-\frac{(-\ell D)^{n-1}}{(n-1)!}\]
Thus,
\begin{align*}
\dim H^0(\bar X,\O((m+1)K_{\bar X}))&\geq\frac{L^n}{(n+1)^n}\binom{(n+1)m+n}{n}+\frac{(-D)^n}{(n-1)!}\sum_{\ell=0}^{m}\ell^{n-1} +O(1)
\end{align*}
The coefficients of $m^n$ and $m^{n-1}$ vanish if $K_{\bar X}^n=0$, and one can compute that the coefficient of $m^{n-2}$ in the first term is
\begin{align*}
\frac{L^n}{(n+1)^n}\frac{(n+1)^{n-2}m^{n-2}}{n!}\cdot \frac{1}{2}\left(\left(\sum_{i=0}^ni\right)^2-\sum_{i=0}^n i^2\right)
&=\frac{L^n}{(n-1)!}\cdot\frac{(n-1)(3n+2)}{24(n+1)}\cdot m^{n-2}
\end{align*}
while that of the second term is
\[\frac{(-D)^n}{(n-1)!}\cdot \frac{n-1}{12}\cdot m^{n-2}\]
using Faulhaber's formula for $\sum^m_{\ell=0}\ell^{n-1}$ (\emph{cf.} \cite{conway}).  As
\[\frac{3n+2}{24}>\frac{n+1}{12}\]
for $n\geq 3$, we are done.
\end{proof}

\section{Ampleness and applications}\label{koddim}

In this section we prove Theorem \ref{slope} and derive Corollaries \ref{cuspbounds} and \ref{brody}.  The proof uses only the multiplicity bound of Proposition \ref{volumeineq}, and is independent of the algebraic arguments of Section \ref{algebraic}.

\begin{prop}\label{nef} Let $\Gamma$ be torsion-free at infinity and $\bar X$ the toroidal compactification of $X=\B/\Gamma$ with boundary $D$.  Then $K_{\bar X}+(1-\lambda)D$ is ample for $0<\lambda<\frac{n+1}{2\pi}$.
\end{prop}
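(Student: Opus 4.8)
The plan is to verify ampleness via the Nakai–Moishezon criterion, testing the divisor $K_{\bar X} + (1-\lambda)D$ against every irreducible subvariety, and to reduce the key inequality to the volume bound of Proposition \ref{volumeineq}. First I would recall from \eqref{canonical} that $\chone(K_{\bar X}+D) = \frac{1}{2\pi}\frac{n+1}{2}[\omega_X]$, so that $K_{\bar X}+D$ is nef and big: it is the pullback of an ample class under the contraction $\bar X \to X^*$ to the Baily–Borel compactification. Writing $K_{\bar X}+(1-\lambda)D = (K_{\bar X}+D) - \lambda D$, the issue is entirely the negative boundary contribution $-\lambda D$, which is problematic only for subvarieties meeting or contained in $D$. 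Since $D|_D \equiv -K_{\bar X}|_D$ is ample on each boundary component (each $E$ is an étale quotient of an abelian variety with anti-ample normal bundle), any subvariety contained in $D$ pairs positively provided $\lambda$ is not too large; the genuine content is subvarieties $Y \not\subset D$, and by taking hyperplane sections it suffices to treat the case of an irreducible curve $C \not\subset D$.

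For such a curve $C$, I would estimate $(K_{\bar X}+(1-\lambda)D).C$ from below. The term $(K_{\bar X}+D).C$ equals $\frac{n+1}{4\pi}\vol_{\omega_X}(C \cap X)$ by \eqref{canonical} (with the factor $\frac{1}{2}$ from $\omega_\B = \tfrac12\Imag ds_\B^2$ absorbed appropriately), which is strictly positive. For the boundary term, decompose the intersection $C.D = \sum_\infty (C.D_\infty)$ over the cusps $q_\infty$ that $C$ meets. For each such cusp, Proposition \ref{volumeineq} gives $\vol(C \cap V(u)) \geq \frac{t_\infty}{u}(C.D_\infty)$ for every $u > u_\infty$; taking $u = u_\infty$ and using that $V(u_\infty)$ injects into $X$ (so these horoball neighborhoods are disjoint across distinct cusps), we get
\[
\vol_{\omega_X}(C \cap X) \;\geq\; \sum_\infty \frac{t_\infty}{u_\infty}(C.D_\infty).
\]
Hence $(K_{\bar X}+(1-\lambda)D).C \geq \sum_\infty \left(\frac{n+1}{4\pi}\cdot\frac{t_\infty}{u_\infty} - \lambda\right)(C.D_\infty)$ plus a positive contribution from the part of $\vol(C\cap X)$ lying outside all horoballs. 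The remaining task is to show $\frac{n+1}{4\pi}\cdot\frac{t_\infty}{u_\infty} \geq \frac{n+1}{2\pi}$, i.e. $t_\infty \geq 2u_\infty$, for each cusp — or rather to show this is what Shimizu's lemma yields, possibly up to adjusting normalization constants. Shimizu's lemma bounds how far a vertical translation of length $t_\infty$ can move a point at height $u$, forcing $u_\infty \leq \frac{t_\infty}{2}$ (this is the standard statement that the smallest injective horoball has height comparable to the shortest vertical translation), which gives exactly $\frac{t_\infty}{u_\infty} \geq 2$ and the claimed range $0 < \lambda < \frac{n+1}{2\pi}$.

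I expect the main obstacle to be bookkeeping the normalization constants correctly — reconciling the factor in \eqref{canonical}, the $\frac{t_\infty}{u}$ in Proposition \ref{volumeineq}, and the precise form of Shimizu's lemma for $\PU(n,1)$ in the Siegel model — so that the threshold comes out to be exactly $\frac{n+1}{2\pi}$ rather than a constant off by a factor of $2$ or $\pi$. A secondary point requiring care is handling curves $C$ that are \emph{not} contained in $D$ but whose image in $X^*$ is a point (i.e. components of $D$ itself, already handled) versus curves meeting $D$ transversally versus curves tangent to $D$ to high order: the bound above via $(C.D_\infty)$ already accounts for multiplicity, so this should be automatic, but I would double-check that the inequality is \emph{strict} — strict positivity for every subvariety is what Nakai–Moishezon requires, and it follows here because the outside-horoball volume contribution is strictly positive for any curve not entirely contained in the union of the minimal horoballs, while a curve inside such a horoball neighborhood meeting $D$ still gets a strictly positive bound as long as $\lambda$ is strictly less than the threshold. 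Finally, I would note that only the discreteness of $\Gamma$ (through Shimizu's lemma) is used, so no arithmeticity or unipotence hypothesis enters, exactly as claimed in the introduction.
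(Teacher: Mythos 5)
Your curve computation is essentially the paper's: combine \eqref{canonical} with Proposition \ref{volumeineq}, sum over cusps, and use the Shimizu-type bound (via \cite[Proposition 2.4]{parker}) that one can take horoball heights $u_i = t_i/2$, yielding $t_\infty/u_\infty \geq 2$ and hence the threshold $\frac{n+1}{2\pi}$. Two points, one minor and one more serious.

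The minor one: injectivity of each $V(u_\infty)$ into $X$ does not by itself imply disjointness of the horoball neighborhoods across distinct cusps, which you need to conclude $\sum_\infty \vol(C\cap V(u_\infty)) \leq \vol(C\cap X)$. Disjointness is a separate condition; Parker's Proposition 2.4 gives both simultaneously for the canonical heights $t_i/2$, and that is the right citation.

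The serious one is the step ``by taking hyperplane sections it suffices to treat the case of an irreducible curve $C$.'' That reduction is not valid: checking $A.C > 0$ on all irreducible curves gives you a strictly nef divisor, not an ample one (there are classical examples of strictly nef non-ample divisors, e.g.\ Mumford's on a ruled surface over a curve). Nakai--Moishezon requires $A^{\dim Y}.Y > 0$ for subvarieties $Y$ of \emph{every} dimension, and your argument says nothing about $A^k.Y$ for $k \geq 2$ when $Y \not\subset D$; you would need a higher-dimensional analogue of Proposition \ref{volumeineq} bounding $\vol(Y\cap V(u))$ by some $k$-fold intersection number along the boundary, which the paper does not provide (and which is not straightforward). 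The paper sidesteps this entirely: the curve computation is packaged as Proposition \ref{positivity}, which establishes only \emph{nefness} of $K_{\bar X}+(1-\frac{n+1}{2\pi})D$; ampleness then follows by convexity of the nef cone together with the externally known fact (from \cite{cerboeff}) that $K_{\bar X}+(1-\epsilon)D$ is ample for small $\epsilon>0$ — the interior of a segment joining a nef class to an ample class lies in the ample cone. Your argument would prove the proposition if you replaced the Nakai--Moishezon framing with this nef-plus-interpolation step.
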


Since $\frac{n+1}{2\pi}>1$ for $n\geq 6$, we deduce:
\begin{cor}  $K_{\bar X}$ is ample if $n\geq 6$.
\end{cor}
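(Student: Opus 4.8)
This is an immediate consequence of Proposition \ref{nef}: the only point to check is that the open interval $\left(0,\frac{n+1}{2\pi}\right)$ actually contains the value $\lambda=1$ once $n\geq 6$. Indeed, $\frac{n+1}{2\pi}>1$ is equivalent to $n+1>2\pi$, and since $n$ is an integer this holds precisely when $n\geq 6$ (for $n=6$ we get $\frac{7}{2\pi}\approx 1.114>1$, and the left side only grows with $n$). So for $n\geq 6$ we may apply Proposition \ref{nef} with $\lambda=1$.

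With that choice the divisor $K_{\bar X}+(1-\lambda)D$ becomes $K_{\bar X}+(1-1)D=K_{\bar X}$, and Proposition \ref{nef} asserts that it is ample. Hence $K_{\bar X}$ is ample for $n\geq 6$. There is essentially no obstacle here: all the geometric content—the multiplicity bound of Proposition \ref{volumeineq} feeding into the positivity argument of Proposition \ref{nef}—has already been carried out, and this corollary only records the numerical observation that the slope threshold $\frac{n+1}{2\pi}$ exceeds $1$ exactly when $n\geq 6$. (One could also note in passing that for $n\leq 5$ the same argument gives ampleness of $K_{\bar X}+(1-\lambda)D$ only for $\lambda$ strictly less than $1$, which is why the corollary is sharp in the range it asserts; whether $K_{\bar X}$ itself is ample for $n=2,\dots,5$ is a separate question, and in fact fails for $n=2$ by Hirzebruch's Example \ref{hirzy}.)
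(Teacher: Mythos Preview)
Your proof is correct and takes essentially the same approach as the paper: the paper's entire justification is the single observation that $\frac{n+1}{2\pi}>1$ for $n\geq 6$, so $\lambda=1$ lies in the admissible interval of Proposition \ref{nef}. Your additional remarks about sharpness and the $n\leq 5$ case are correct commentary but not needed for the argument.
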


It was recently shown in \cite{cerbocanon} that this is always true in dimension $n\geq 3$ up to a cover:  if $X'\into X$ is an \'etale cover ramifying along each boundary component, then $K_{\bar X'}$ is ample.

For the proof of Proposition \ref{nef}, let $q_i$ be the cusps of $X$, and denote by $D_i$ the boundary component of $\bar X$ compactifying $q_i$.  Let $t_i$ be the length of the smallest vertical translation in the stabilizer of $q_i$.

Now suppose for each cusp $q_i$ we choose a horoball height $u_i$ such that:  
\begin{enumerate}
\item[($*$)] each $V(u_i)$ injects into $X$ (\emph{i.e.} $u_i$ is less than the height of $q_i$); 
\item[($**$)] the $V(u_i)$ are all disjoint.
\end{enumerate}  

\begin{prop}\label{positivity} With the above notation, let $L=K_{\bar X}+D$.  Then
\begin{equation}L- \frac{n+1}{4\pi}\sum_i\frac{t_i}{u_i}D_i\label{sumdiv}\end{equation}
is nef.
\end{prop}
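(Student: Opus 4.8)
The plan is to verify nefness of the $\R$-divisor in \eqref{sumdiv} by testing it against an arbitrary irreducible curve $C\subset\bar X$, using the Nakai--Moishezon-type criterion that a divisor which is nonnegative on every curve is nef. First I would recall from \eqref{canonical} that $\chone(K_{\bar X}+D)=\frac{1}{2\pi}\cdot\frac{n+1}{2}[\omega_X]$ as currents, so that for any curve $C$ not contained in the boundary,
\[
L.C=\frac{n+1}{4\pi}\int_C\omega_X=\frac{n+1}{4\pi}\vol(C).
\]
Now split into cases. If $C$ is contained in the boundary $D$, then since $L|_D\equiv 0$ (it contracts each $D_i$ to a point of $X^*$) and $D_i.C\leq 0$ for the component $D_i$ containing $C$ while $D_j.C\geq 0$ for $j\neq i$... wait, that sign is the wrong way; rather $L.C=0$ and $-\frac{n+1}{4\pi}\sum_i\frac{t_i}{u_i}D_i.C\geq 0$ because $D_i.C=(\O_{D_i}(D_i).C)\leq 0$ as the normal bundle is anti-ample, and the other terms $D_j.C$ for $j\neq i$ vanish since distinct boundary components are disjoint. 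So the boundary-curve case is immediate.

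The substantive case is $C\not\subset D$. Here $L.C=\frac{n+1}{4\pi}\vol(C)$, and I must bound $\vol(C)$ below by $\sum_i\frac{t_i}{u_i}(C.D_i)$. The key input is Proposition \ref{volumeineq}: for each cusp $q_i$, the part of $C$ lying in the horoball neighborhood $\bar V(u_i)$ satisfies $\vol(C\cap V(u_i))\geq\frac{t_i}{u_i}(C.D_i)$, where I use that the local intersection multiplicity of $C$ with $D_i$ is entirely accounted for by the portion of $C$ near $q_i$ — indeed $C$ can only meet $D_i$ inside $\bar V(u_i)$, since $\bar V(u_i)$ is a neighborhood of $D_i$ in $\bar X$. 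Because of condition $(**)$ the horoball neighborhoods $V(u_i)$ are pairwise disjoint, and by condition $(*)$ each injects into $X$, so
\[
\vol(C)\geq\sum_i\vol(C\cap V(u_i))\geq\sum_i\frac{t_i}{u_i}(C.D_i).
\]
Combining, $\Bigl(L-\frac{n+1}{4\pi}\sum_i\frac{t_i}{u_i}D_i\Bigr).C=\frac{n+1}{4\pi}\bigl(\vol(C)-\sum_i\frac{t_i}{u_i}(C.D_i)\bigr)\geq 0$, and since $C$ was arbitrary the divisor is nef.

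The main obstacle, and the point deserving the most care, is the bookkeeping in the inequality $\vol(C)\geq\sum_i\vol(C\cap V(u_i))$ together with the claim that Proposition \ref{volumeineq} captures the full intersection number $C.D_i$. One must check that Proposition \ref{volumeineq} applies to the (possibly reducible-after-normalization, but still pure one-dimensional) intersection $C\cap\bar V(u_i)$ even when $C$ is not irreducible inside $\bar V(u_i)$ — this is fine since the proof of \ref{volumeineq} is component-by-component on the normalization and the intersection product $C.D_i$ is additive over those components. One also uses that $C\not\subset D$ forces each relevant branch of $C$ near $D_i$ to be an honest analytic curve not contained in the boundary, so the hypothesis of \ref{volumeineq} is met. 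Finally, the passage from "nonnegative on all curves" to "nef" for an $\R$-divisor is standard (Kleiman), so no extra work is needed there.
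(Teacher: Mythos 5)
Your proof is correct and follows essentially the same route as the paper's (which is given in two lines): use \eqref{canonical} together with Proposition \ref{volumeineq} to handle curves not contained in the boundary, and use the ampleness of $-D_i|_{D_i}$ (equivalently, of $K_{\bar X}|_{D_i}$, since $L|_{D_i}\equiv 0$) for curves contained in a boundary component. You have simply spelled out the bookkeeping — the disjointness of the $V(u_i)$ giving $\vol(C)\geq\sum_i\vol(C\cap V(u_i))$, and the fact that all of $C\cdot D_i$ is picked up inside $\bar V(u_i)$ — which the paper leaves implicit.
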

\begin{proof} By \eqref{canonical} and Proposition \ref{volumeineq}, the divisor is nef modulo the boundary, but for any component $E$ of the boundary $K_{\bar X}|_E\equiv-E|_E$ is ample.
\end{proof}

\begin{proof}[Proof of Proposition \ref{nef}]
By \cite[Proposition 2.4]{parker}, we can uniformly take $u_i=t_i/2$, and the resulting ``canonical" horoballs satisfy properties $(*)$ and $(**)$.  It follows that $K_{\bar X}+(1-\frac{n+1}{2\pi})D$ is nef.  On the other hand, we already know $K_{\bar X}+(1-\epsilon)D$ is ample for small $\epsilon>0$ (\emph{cf.} \cite{cerboeff}).  The interior of any line drawn between a point of the nef cone and a point in the ample cone is contained in the ample cone, and $K_{\bar X}+tD$ for $t\in[1-\frac{n+1}{2\pi},1-\epsilon]$ is such a line.  
\end{proof}

One immediate application of Proposition \ref{nef} is a bound on the number of cusps of $X$:

\begin{prop} \label{cuspcount} For $\Gamma$ torsion-free at infinity, let $k$ be the number of cusps of $X=\B/\Gamma$.  Then
\[k\leq\frac{(2\pi)^n}{(n+1)^n}\cdot \frac{L^n}{(n-1)!}\]
Further, in dimensions $n=3,4,5$, we have \[k\leq \frac{L^n}{(n-1)!}\]
\end{prop}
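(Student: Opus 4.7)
The plan is to derive both bounds from a single self-intersection computation on the toroidal compactification, combined with a Riemann--Roch bound on each boundary component. Set $L=K_{\bar X}+D$. I will exploit two numerical facts: (a) $L$ is the pullback of an ample class along the Baily--Borel contraction $\bar X\to X^*$, so $L|_{D_i}\equiv 0$ and consequently every mixed intersection $L^a\cdot D_i^b$ with $a,b\geq 1$ vanishes; (b) the boundary components are pairwise disjoint, so $D^m=\sum_iD_i^m$ for $m\geq 1$, and in particular $D_i^n=(-1)^{n-1}(-N_i)^{n-1}$, where $N_i=\O_{D_i}(D_i)$ is the anti-ample normal bundle.

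For the first bound I would apply Proposition \ref{positivity} with the canonical horoballs $u_i=t_i/2$ (which inject into $X$ and are pairwise disjoint by \cite[Proposition 2.4]{parker}), giving that $L-cD$ is nef for $c=\frac{n+1}{2\pi}$. The two facts above collapse the expansion of $(L-cD)^n$ down to its two extreme terms:
\[0\leq (L-cD)^n = L^n-c^n\sum_i(-N_i)^{n-1}.\]
Summing the per-cusp estimate $(-N_i)^{n-1}\geq (n-1)!$ over the $k$ cusps will then give $L^n\geq c^n(n-1)!\,k$, which rearranges to the stated first bound.

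The per-cusp estimate is the one subtle ingredient, and the main place I would worry. Because $D_i$ is an \'etale quotient of an abelian variety, its tangent bundle is flat: all Chern classes of $T_{D_i}$ vanish and $K_{D_i}$ is a torsion line bundle, in particular numerically trivial. Kodaira vanishing then kills $H^j(D_i,-N_i)$ for $j>0$, and Hirzebruch--Riemann--Roch with trivial Todd class reduces to $\chi(D_i,-N_i)=(-N_i)^{n-1}/(n-1)!$; since $-N_i$ is ample this is a positive integer, which forces $(-N_i)^{n-1}\geq (n-1)!$. For the improved bound in dimensions $n=3,4,5$ I would simply replace $L-cD$ by $K_{\bar X}=L-D$: by Theorem \ref{minimal}, $K_{\bar X}$ is itself nef for $n\geq 3$, so the same expansion yields $K_{\bar X}^n=L^n-\sum_i(-N_i)^{n-1}\geq 0$, and the per-cusp bound gives $k\leq L^n/(n-1)!$. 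This is sharper than the first bound exactly when $c<1$, namely for $n\leq 5$.
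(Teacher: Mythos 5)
Your argument is correct and is essentially the paper's own proof: both rest on expanding $(aL-bD)^n\geq 0$ for a nef combination (with $b/a=\frac{n+1}{2\pi}$ from Proposition \ref{positivity}/\ref{nef}, and $b/a=1$ from Theorem \ref{minimal} when $n=3,4,5$), noting that the mixed terms vanish because $L|_D\equiv 0$, and bounding each cusp's contribution by $(-N_i)^{n-1}\geq (n-1)!$ via Riemann--Roch on the boundary component with numerically trivial Todd class. The only cosmetic difference is that you invoke Proposition \ref{positivity} with Parker's canonical horoballs directly rather than quoting Proposition \ref{nef}, which is exactly how that proposition is proved in the paper.
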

\begin{proof}Note that each component of the boundary is an \'etale quotient of an abelian variety so all of the chern classes of $\Omega_D$ vanish numerically.  $D|_D$ is anti-ample, so on the one hand
\[k\leq \frac{D.(-D)^{n-1}}{(n-1)!}=-\frac{(-D)^n}{(n-1)!}=\chi(D,\O_D(-D))\]
but on the other hand if $aL-bD$ is a nef $\R$-divisor for $a,b>0$,
\[0\leq(aL-bD)^n=a^nL^n+b^n(-D)^n\]
Thus
\[k\leq \left(\frac{a}{b}\right)^n\cdot \frac{L^n}{(n-1)!}\]
By Proposition \ref{nef}, we can take $a=1$ and $b=\frac{n+1}{2\pi}$.  By Lemma \ref{minimal} of the next section, for $n=3,4,5$ we can do better with $a=1$ and $b=1$.
\end{proof}
A similar argument is used by Di Cerbo--Di Cerbo to give an improvement to Parker's cusp bound in dimensions 2  \cite{cerbosharp} and 3 \cite{cerboeff}.  Note that by \eqref{canonical} we have
\[\vol(X)=\frac{(4\pi)^n}{n!(n+1)^n}\cdot L^n\]
and so we can restate the best known bounds in this context:
\begin{cor}\label{manycusps}Let $k$ be the number of cusps of $X$.  Then
\[\frac{\vol(X)}{k}\geq \begin{cases}
\frac{\pi^2}{2}&n=2\\
\frac{(4\pi)^n}{n(n+1)^n}&n=3,4,5\\
\frac{2^n}{n}&n\geq 6
\end{cases}\]
\end{cor}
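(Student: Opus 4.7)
The plan is to directly combine the cusp count from Proposition \ref{cuspcount} with the normalization formula $\vol(X)=\frac{(4\pi)^n}{n!(n+1)^n}\cdot L^n$ (equivalently $L^n = \frac{n!(n+1)^n}{(4\pi)^n}\vol(X)$) already recorded immediately before the corollary. The three cases of the bound correspond to three different sources of input: (i) for $n\geq 6$, the general estimate $k\leq \frac{(2\pi)^n}{(n+1)^n}\cdot \frac{L^n}{(n-1)!}$ coming from Proposition \ref{nef}; (ii) for $n=3,4,5$, the improved estimate $k\leq \frac{L^n}{(n-1)!}$ coming from Lemma \ref{minimal} (\emph{i.e.} nefness of $K_{\bar X}+0\cdot D$ together with $-D|_D$ ample); and (iii) for $n=2$, the sharp bound of Di Cerbo--Di Cerbo \cite{cerbosharp} cited as prior work.

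For case (i), I would substitute $L^n=\frac{n!(n+1)^n}{(4\pi)^n}\vol(X)$ into the first bound of Proposition \ref{cuspcount}, obtaining
\[
k\leq \frac{(2\pi)^n}{(n+1)^n}\cdot \frac{1}{(n-1)!}\cdot \frac{n!(n+1)^n}{(4\pi)^n}\vol(X) = \frac{n}{2^n}\vol(X),
\]
which rearranges to $\vol(X)/k\geq 2^n/n$. For case (ii), the same substitution in $k\leq L^n/(n-1)!$ gives
\[
k\leq \frac{n(n+1)^n}{(4\pi)^n}\vol(X),
\]
and hence $\vol(X)/k\geq (4\pi)^n/(n(n+1)^n)$. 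For case (iii), the bound $\vol(X)/k\geq \pi^2/2$ is quoted verbatim from \cite{cerbosharp}.

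This is essentially a bookkeeping proof, so there is no genuine obstacle; the only thing to check is the arithmetic simplification in case (i), where the factors $(n+1)^n$ and $(2\pi)^n/(4\pi)^n=1/2^n$ cancel cleanly against $n!/(n-1)!=n$. One may also want to remark that in case (ii) the resulting constant $(4\pi)^n/(n(n+1)^n)$ is genuinely larger than $2^n/n$ for $n=3,4,5$, which justifies recording the cases separately; this reduces to the elementary inequality $(2\pi/(n+1))^n>1$, valid precisely for $n\leq 5$, and explains the change of regime at $n=6$.
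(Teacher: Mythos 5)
Your proposal is correct and matches the paper's own (essentially implicit) derivation: Corollary \ref{manycusps} is obtained exactly by substituting $L^n=\frac{n!(n+1)^n}{(4\pi)^n}\vol(X)$ into the two bounds of Proposition \ref{cuspcount} and quoting \cite{cerbosharp} for $n=2$. The arithmetic in both cases checks out, and your remark that $(2\pi/(n+1))^n>1$ exactly for $n\leq 5$ correctly explains the change of regime at $n=6$.
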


The bound of Corollary \ref{manycusps} in dimension $n=2$ is sharp and due to Di Cerbo--Di Cerbo \cite{cerbosharp}.  For $n\geq 6$, the above bound is equal to that derived by Parker \cite{parker} in the case that the parabolic subgroups of $\Gamma$ are unipotent; we show that the same bound holds for the larger class of $\Gamma$ torsion-free at infinity.  On the other hand, the argument of Proposition \ref{cuspcount} could conceivably improve Parker's bound for all torsion-free $\Gamma$ if $m_\infty$ can be controlled sufficiently well.

Finally, we finish the proof of Corollary \ref{brody}:
\begin{cor}\label{brody2}
Let $\pi:X'\into X$ be a finite \'etale cover.  Then $\bar X'$ satisfies the Green-Griffiths conjecture with exceptional locus $D$ if $\pi$ ramifies to order at least 4 along every boundary component.
\end{cor}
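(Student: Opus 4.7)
The plan is to apply Nadel's theorem exactly as in the proof of Corollary \ref{brody1}, but to replace the use of bigness of $K_{\bar X}$ (from Proposition \ref{general}) with the stronger ampleness result of Proposition \ref{nef}. Since $\B^n$ has holomorphic sectional curvature $-\gamma$ with $\gamma=\tfrac{2}{n+1}$, it suffices to show that $K_{\bar X'}+\bigl(1-\tfrac{n+1}{2}\bigr)D'$ is big; then Nadel's theorem forces every entire map $\C\to\bar X'$ into $D'$. Note that $\bar X'$ is a smooth toroidal compactification since $\Gamma'\subset\Gamma$ is torsion-free at infinity, so the hypothesis of Nadel's theorem is satisfied.

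By Proposition \ref{nef}, $K_{\bar X}+(1-\lambda)D$ is ample for any $0<\lambda<\tfrac{n+1}{2\pi}$, and so its pullback under the finite map $\pi$ is ample on $\bar X'$. Using the identity $\pi^*K_{\bar X}=K_{\bar X'}+D'-\pi^*D$ recorded in the proof of Corollary \ref{brody1}, this pullback equals $K_{\bar X'}+D'-\lambda\pi^*D$. The ramification hypothesis $\pi^*D\geq\ell D'$ with $\ell=4$ makes $\pi^*D-\ell D'$ effective, so
\[
K_{\bar X'}+(1-\lambda\ell)D' \;=\; \pi^*\bigl(K_{\bar X}+(1-\lambda)D\bigr) \;+\; \lambda\bigl(\pi^*D-\ell D'\bigr)
\]
is ample-plus-effective, hence big.

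The key numerical point is that $\ell=4>\pi$, which ensures the interval $\bigl(\tfrac{n+1}{2\ell},\tfrac{n+1}{2\pi}\bigr)$ is nonempty for every $n$. Choosing any $\lambda$ in this interval gives $\lambda\ell>\tfrac{n+1}{2}$, so $(1-\lambda\ell)<1-\tfrac{n+1}{2}$, and
\[
K_{\bar X'}+\bigl(1-\tfrac{n+1}{2}\bigr)D' \;=\; \bigl(K_{\bar X'}+(1-\lambda\ell)D'\bigr) \;+\; \bigl(\lambda\ell-\tfrac{n+1}{2}\bigr)D'
\]
is again big-plus-effective, hence big. The only real obstacle is the quantitative comparison of $\ell$ with $\pi$: the weaker bigness bound used in Corollary \ref{brody1} would demand $\ell\geq\tfrac{n+1}{2}$, which grows with $n$, whereas Proposition \ref{nef} trades this dimension-dependent bound for the universal constant $\pi$, making $\ell\geq 4$ suffice uniformly in $n$.
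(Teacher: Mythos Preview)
Your proof is correct and follows essentially the same route as the paper: pull back the ample divisor $K_{\bar X}+(1-\lambda)D$ from Proposition~\ref{nef}, use the ramification bound $\pi^*D\geq 4D'$, and conclude via Nadel's theorem. Your explicit choice of $\lambda$ in the open interval $\bigl(\tfrac{n+1}{2\ell},\tfrac{n+1}{2\pi}\bigr)$ is in fact slightly cleaner than the paper's version, which takes $\lambda=\tfrac{n+1}{2\pi}$ at the boundary of the range in Proposition~\ref{nef} and so, strictly speaking, only gets nefness rather than ampleness there; the slack $4>\pi$ is what makes both arguments go through.
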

\begin{proof}
By the argument of Corollary \ref{brody1}, we need $K_{\bar X'}+(1-\frac{n+1}{2})D'$ to be big.  By assumption, $\pi^*D\geq 4D'$, so
\begin{align*}
\pi^*\left((K_{\bar X}+D)-\frac{n+1}{2\pi}D\right)&= (K_{\bar X'}+D')-\frac{n+1}{2\pi}\pi^*D\\
&\leq (K_{\bar X'}+D')-\frac{n+1}{2}D'
 \end{align*}
The left hand side is big by Proposition \ref{nef}, so the right hand side is as well.
\end{proof}

The following Corollary is a well-known consequence of Nadel's theorem for arithmetic quotients but in fact the same proof holds for non-arithmetic quotients given the work of \cite{mok}.  We include it for completeness, but the main point of Corollary \ref{brody} is the improved control over the ramification order.

\begin{cor}
Every complex hyperbolic orbifold $X$ admits a finite \'etale cover $X'$ such that the toroidal compactification $\bar X'$ satisfies the Green--Griffiths conjecture with the boundary as exceptional locus.
\end{cor}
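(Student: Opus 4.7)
The plan is to reduce to Corollary \ref{brody} (specifically Corollary \ref{brody2}) by constructing an étale cover whose uniformizing group has large index of vertical translations at every cusp. The argument splits into a reduction step, a group-theoretic construction, and an appeal to the previous corollary.

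First I would pass to a finite étale cover $X_0\into X$ such that the uniformizing group $\Gamma_0\subset \PU(n,1)$ is torsion-free at infinity in the sense of Definition \ref{smooth}. As recalled in Section \ref{background}, every discrete cofinite-volume $\Gamma\subset\PU(n,1)$ contains a finite-index neat subgroup---by \cite{amrt} in the arithmetic case and by \cite{hummel} (or equivalently \cite{mok}) in general---and neat groups are automatically torsion-free at infinity. So $\bar X_0$ is a smooth projective variety to which the results of Section \ref{koddim} apply.

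Second, I would construct a further finite étale cover $X'\into X_0$ whose map on toroidal compactifications ramifies to order at least $4$ along every boundary component. Label the (finitely many) cusps of $X_0$ by $q_1,\ldots,q_k$, and for each $i$ let $\Theta_{i,\infty}\subset\Gamma_0$ be the infinite cyclic group of vertical translations at $q_i$, generated by some $\gamma_i$. By residual finiteness of $\Gamma_0$ (it is a finitely generated linear group), for each $i$ there exists a finite-index normal subgroup $N_i\trianglelefteq \Gamma_0$ containing none of $\gamma_i,\gamma_i^2,\gamma_i^3$, so that $N_i\cap\Theta_{i,\infty}=k_i\Theta_{i,\infty}$ for some $k_i\geq 4$. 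Setting $\Gamma'=\bigcap_{i=1}^k N_i$, any cusp of $X'=\B/\Gamma'$ lies over some $q_i$ and has vertical translation subgroup equal to a conjugate of $\Gamma'\cap\Theta_{i,\infty}\subset k_i\Theta_{i,\infty}$. Hence the cover $\bar X'\into \bar X_0$ ramifies to order $\geq 4$ along every boundary component of $\bar X'$, and $\Gamma'$ remains torsion-free at infinity since it is contained in the torsion-free at infinity group $\Gamma_0$.

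Finally, Corollary \ref{brody2} applied to $\pi:X'\into X_0$ shows that $\bar X'$ satisfies the Green--Griffiths conjecture with the boundary $D'$ as exceptional locus. The main subtlety of the argument is arranging the ramification order to be at least $4$ simultaneously at all boundary components; this is handled precisely by combining residual finiteness of $\Gamma_0$ with the finiteness of the cusp set, so that the intersection $\bigcap_i N_i$ is still of finite index. Everything else is a direct consequence of results established earlier in the paper.
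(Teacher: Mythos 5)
Your proposal is correct, but it takes a different route from the paper, which gives no explicit argument and instead observes that the statement is a well-known consequence of Nadel's theorem \cite{nadel} (whose classical proof, for arithmetic lattices, passes to congruence subgroups to increase the cusp widths), with Mok's construction of toroidal compactifications supplying the non-arithmetic case. You instead derive the corollary from the paper's own quantitative result: after first passing to a cover $X_0$ with uniformizing group torsion-free at infinity (Definition \ref{smooth}), you use residual finiteness of the finitely generated linear group $\Gamma_0$ to find, for each of the finitely many cusps, a finite-index \emph{normal} subgroup $N_i$ killing $\gamma_i,\gamma_i^2,\gamma_i^3$, and then $\Gamma'=\bigcap_i N_i$ has vertical translation subgroups of index at least $4$ at every parabolic fixed point; normality of the $N_i$ is what makes this uniform over all cusps of $X'$ lying above a given $q_i$, and the induced map $\bar X'\to\bar X_0$ then ramifies to order at least $4$ along every boundary component, so Corollary \ref{brody2} applies (the check that torsion-freeness at infinity passes to $\Gamma'$ is the easy observation that $\Lambda'_\infty$ injects into $\Lambda_{0,\infty}$). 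Your residual-finiteness step is exactly the general-lattice analogue of Nadel's congruence-subgroup trick, so in spirit the two arguments construct the same kind of cover; the trade-off is that your proof is self-contained within the paper and works verbatim for non-arithmetic lattices, but it leans on Proposition \ref{nef} (the paper's new ampleness theorem) through Corollary \ref{brody2}, whereas the route the paper has in mind needs only Nadel's original criterion and no new positivity input. Two cosmetic points: the subgroup $N_i\cap\Theta_{i,\infty}$ should be written multiplicatively as $\langle\gamma_i^{k_i}\rangle$, and one should note (as you implicitly do) that $\Gamma'$ is normal in $\Gamma_0$, so the vertical translation subgroup at a translate $g\cdot q_i$ is the conjugate $g(\Gamma'\cap\Theta_{i,\infty})g^{-1}$.
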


Of course, this is equivalent to the Baily--Borel compactification $X'^*$ having no nontrivial entire maps $\C\into X'^*$.

\bibliography{biblio}

\newcommand{\etalchar}[1]{$^{#1}$}
\begin{thebibliography}{DCDC15b}

\bibitem[AMRT75]{amrt}
A.~Ash, D.~Mumford, M.~Rapoport, and Y.~Tai.
\newblock {\em Smooth compactification of locally symmetric varieties}.
\newblock Math. Sci. Press, Brookline, Mass., 1975.
\newblock Lie Groups: History, Frontiers and Applications, Vol. IV.

\bibitem[BB66]{bb}
W.~L. Baily, Jr. and A.~Borel.
\newblock Compactification of arithmetic quotients of bounded symmetric
  domains.
\newblock {\em Ann. of Math. (2)}, 84:442--528, 1966.

\bibitem[BCE{\etalchar{+}}02]{reduce}
T.~Bauer, F.~Campana, T.~Eckl, S.~Kebekus, T.~Peternell, S.~Rams, T.~Szemberg,
  and L.~Wotzlaw.
\newblock A reduction map for nef line bundles.
\newblock In {\em Complex geometry ({G}\"ottingen, 2000)}, pages 27--36.
  Springer, Berlin, 2002.

\bibitem[BT15]{HMV}
B.~Bakker and J.~Tsimerman.
\newblock The geometric torsion conjecture for abelian varieties with real
  multiplication.
\newblock \href{http://arxiv.org/abs/1504.02090}{\texttt{arXiv:1504.02090}},
  2015.

\bibitem[CG96]{conway}
J.~H. Conway and R.~Guy.
\newblock {\em The book of numbers}.
\newblock Springer Science \& Business Media, 1996.

\bibitem[DCDC14]{cerbosharp}
G.~Di~Cerbo and L.~F. Di~Cerbo.
\newblock A sharp cusp count for complex hyperbolic surfaces and related
  results.
\newblock {\em Archiv der Mathematik}, 103(1):75--84, 2014.

\bibitem[DCDC15a]{cerboeff}
G.~Di~Cerbo and L.~F. Di~Cerbo.
\newblock Effective results for complex hyperbolic manifolds.
\newblock {\em Journal of the London Mathematical Society}, 91(1):89--104,
  2015.

\bibitem[DCDC15b]{cerbocanon}
G.~Di~Cerbo and L.~F. Di~Cerbo.
\newblock On the canonical divisor of smooth toroidal compactifications of
  complex hyperbolic manifolds.
\newblock \href{http://arxiv.org/abs/1502.06258}{\texttt{arXiv:1502.06258}},
  2015.

\bibitem[Deb01]{debarre}
O.~Debarre.
\newblock {\em Higher-dimensional algebraic geometry}.
\newblock Springer Science \& Business Media, 2001.

\bibitem[Dem12]{demaillybook}
J.-P. Demailly.
\newblock {\em Analytic methods in algebraic geometry}, volume~1 of {\em
  Surveys of Modern Mathematics}.
\newblock International Press, Somerville, 2012.

\bibitem[DM]{delignemostow}
P.~Deligne and G.~D. Mostow.
\newblock Commensurabilities among lattices in {$\mathrm{PU}(1, n) $}.
\newblock {\em Annals of Mathematics Studies}, 132.

\bibitem[Hir84]{hirz}
F.~Hirzebruch.
\newblock Chern numbers of algebraic surfaces: an example.
\newblock {\em Math. Ann.}, 266(3):351--356, 1984.

\bibitem[HT02]{hwangto1}
J.~Hwang and W.~To.
\newblock Volumes of complex analytic subvarieties of {H}ermitian symmetric
  spaces.
\newblock {\em American Journal of Mathematics}, 124(6):1221--1246, 2002.

\bibitem[Hum98]{hummel}
C.~Hummel.
\newblock Rank one lattices whose parabolic isometries have no rotational part.
\newblock {\em Proc. Amer. Math. Soc.}, 126(8):2453--2458, 1998.

\bibitem[Kaw92]{abundancethree}
Y.~Kawamata.
\newblock Abundance theorem for minimal threefolds.
\newblock {\em Invent. Math.}, 108(2):229--246, 1992.

\bibitem[KM98]{kollarmori}
J.~Koll{\'a}r and S.~Mori.
\newblock {\em Birational geometry of algebraic varieties}, volume 134 of {\em
  Cambridge Tracts in Mathematics}.
\newblock Cambridge University Press, Cambridge, 1998.

\bibitem[Laz04]{Lazarsfeld}
R.~Lazarsfeld.
\newblock {\em Positivity in algebraic geometry. {I}}, volume~48 of {\em
  Ergebnisse der Mathematik und ihrer Grenzgebiete}.
\newblock Springer-Verlag, Berlin, 2004.

\bibitem[Mar84]{marguilis}
G.~A. Margulis.
\newblock Arithmeticity of the irreducible lattices in the semi-simple groups
  of rank greater than 1.
\newblock {\em Inventiones mathematicae}, 76(1):93--120, 1984.

\bibitem[Mok12]{mok}
N.~Mok.
\newblock Projective algebraicity of minimal compactifications of
  complex-hyperbolic space forms of finite volume.
\newblock In {\em Perspectives in analysis, geometry, and topology}, volume 296
  of {\em Progr. Math.}, pages 331--354. Birkh\"auser/Springer, New York, 2012.

\bibitem[Mos80]{mostow}
G.~D. Mostow.
\newblock On a remarkable class of polyhedra in complex hyperbolic space.
\newblock {\em Pacific J. Math.}, 86(1):171--276, 1980.

\bibitem[Mum77]{mumfordhp}
D.~Mumford.
\newblock Hirzebruch's proportionality theorem in the noncompact case.
\newblock {\em Invent. Math.}, 42:239--272, 1977.

\bibitem[Nad89]{nadel}
A.~M. Nadel.
\newblock The nonexistence of certain level structures on abelian varieties
  over complex function fields.
\newblock {\em Ann. of Math. (2)}, 129(1):161--178, 1989.

\bibitem[Par98]{parker}
J.~R. Parker.
\newblock On the volumes of cusped, complex hyperbolic manifolds and orbifolds.
\newblock {\em Duke Math. J.}, 94(3):433--464, 1998.

\bibitem[SY82]{sy}
Y.~T. Siu and S.~T. Yau.
\newblock Compactification of negatively curved complete {K}\"ahler manifolds
  of finite volume.
\newblock In {\em Seminar on {D}ifferential {G}eometry}, volume 102 of {\em
  Ann. of Math. Stud.}, pages 363--380. Princeton Univ. Press, Princeton, N.J.,
  1982.

\end{thebibliography}
\bibliographystyle{alpha}
\end{document}